\renewcommand{\leq}{\leqslant}
\renewcommand{\geq}{\geqslant}
\theoremstyle{plain}
\newtheorem{theorem}{Theorem}
\newtheorem{conjecture}{Conjecture}
\newtheorem{proposition}[theorem]{Proposition}
\theoremstyle{definition}
\newtheorem{definition}[theorem]{Definition}
\newcommand{\E}{\mathbb{E}}
\newcommand{\PP}{\mathbb{P}}
\newcommand{\kF}{\mathcal{F}}
\newcommand{\eps}{\varepsilon}
\newcommand{\hyp}{\mathrm{hyp}}
\title{The diameter of random Bely\u{\i} surfaces}
\author{\textsc{Thomas Budzinski}\footnote{ENS Paris and Universit\'e Paris-Saclay. E-mail: \href{mailto:thomas.budzinski@ens.fr}{thomas.budzinski@ens.fr}.}, \, \textsc{Nicolas Curien}\footnote{Universit\'e Paris-Saclay and Institut Universitaire de France. E-mail: \href{mailto:nicolas.curien@gmail.com}{nicolas.curien@gmail.com}.} \, and \textsc{Bram Petri}\footnote{Sorbonne Universit\'e. E-mail: \href{mailto:bram.petri@imj-prg.fr}{bram.petri@imj-prg.fr}}}
\begin{document}

\maketitle
\begin{abstract} We determine the asymptotic growth rate of the diameter of the random hyperbolic surfaces constructed by Brooks and Makover \cite{BM04}. This model consists of a uniform gluing of $2n$ hyperbolic ideal triangles along their sides followed by a compactification to get a random hyperbolic surface of genus roughly $n/2$. We show that the diameter of those random surfaces is asymptotic to $2 \log n$ in probability as $n \to \infty$.
\end{abstract}

\section{Introduction}

There are several invariants that measure the ``connectedness'' of a closed hyperbolic surface $X$: its diameter $\mathrm{diam}(X)$, its Cheeger constant $h(X)$ and the first non-zero eigenvalue $\lambda_1(X)$ of its Laplacian. The first measures the maximal distance between pairs of points on $X$, the second how hard it is to cut off a large piece from $X$ and the last for instance appears in the rate of mixing of the geodesic flow on $X$. Of course, these three invariants are interrelated: Cheeger \cite{Cheeger} and Buser \cite{BuserNote} proved that lower bounds on $h(X)$ lead to lower bounds on $\lambda_1(X)$ and vice versa, and Brooks \cite{BrooksNumberTheory} proved that a large Cheeger constant implies that the diameter is small.

Looking for the most connected surfaces in the moduli space $\mathcal{M}_g$ of closed hyperbolic surfaces of genus $g$ hence gives rise to three, a priori distinct, optimization problems and leads us to define the following three functions of $g$:
\[ D(g) = \min_{X\in\mathcal{M}_g} \mathrm{diam}(X), \quad H(g) = \sup_{X\in\mathcal{M}_g} h(X) \quad \text{and} \quad \Lambda(g) = \sup_{X\in\mathcal{M}_g} \lambda_1(X).\]
In a companion paper \cite{BCP19b}, we determined the asymptotic behavior of the first when $g$ gets large. Concretely, we proved that $D(g) \sim \log(g)$ as $g\to \infty$. The behavior of $H$ and $\Lambda$ for large genus is less well understood. The best current bounds are
\[\frac{-32+\sqrt{\frac{6923}{2}}}{160}\leq \limsup_{g\to \infty} H(g) \leq 1 \quad \text{and} \quad \frac{975}{4096} \leq \limsup_{g\to\infty}\Lambda(g) \leq \frac{1}{4}. \]
The upper bounds are classical work by Huber \cite{Huber}, Cheng \cite{Cheng} and Cheeger \cite{Cheeger} and the lower bounds come from compactifications of principal congruence covers of $\mathbb{H}^2/\mathrm{PSL}(2,\mathbb{Z})$ and combine work by Kim and Sarnak \cite{Kim}, Brooks \cite{Brooks} and Buser \cite{BuserNote} \footnote{In \cite[Theorem 7.3]{Benson} it is proved that Selberg's conjecture implies that the lower bound on the Cheeger constant can be improved to $0.205\ldots$}. Note that $\frac{-32+\sqrt{\frac{6923}{2}}}{160} \approx 0.168\ldots$ and $\frac{975}{4096} \approx 0.238\ldots$.


Another approach to attacking these problems is through random surfaces. Indeed, in recent years, various models of random hyperbolic surfaces have been introduced \cite{BM04,Mir,GPY,BCP19b} and all of these give rise to highly connected surfaces (in all three ways this can be measured). For instance, the result on the diameter in \cite{BCP19b} is based on a random construction. Moreover, Mirzakhani showed that surfaces picked at random in $\mathcal{M}_g$ using the probability measure induced by the Weil-Petersson volume form have Cheeger constant $h\geq \frac{\log(2)}{2\pi+\log(2)} \approx 0.099\ldots $ \cite{Mir}. Finally, it is also expected that some of these models give rise to sequences of closed hyperbolic surfaces whose first eigenvalue converges to $\frac{1}{4}$ (see for instance \cite[Problem 10.3]{Wright}). 

%

\paragraph{The BM model.} In this paper, we investigate the model for random Bely\u{\i} surfaces introduced by Brooks and Makover in \cite{BM04}. This model consists of randomly gluing together $2n$ ideal hyperbolic triangles (with shear $0$) into a complete hyperbolic surface $S^O_n$. This surface is then compactified to obtain a closed hyperbolic surface $S^C_n$ as on Figure \ref{fig_compactification}, see \cite{BM04} and Section \ref{sec:BM} for details. It turns out that the genus $ \mathrm{Genus}(S^C_n)$ of these surfaces is strongly concentrated around $n/2$ as $n\to \infty$ \cite{BM04,Gamburd}.

Besides as a source for highly connected surfaces, the BM model is interesting in its own right. A classical theorem by Bely\u{\i} \cite{Bel} for instance implies that the collection of all the possible surfaces that can be obtained -- all compactifications of shear $0$ gluings of all possible numbers of triangles -- is dense among all hyperbolic surfaces. As such, the BM model is a reasonable model for a ``typical'' hyperbolic surface of large genus (as opposed to the model we employed in \cite{BCP19b}). On top of that, the surfaces sampled according to the BM model show very similar behavior, at least qualitatively, to those sampled using the Weil-Petersson volume form (a phenomenon for which no a priori reason is yet known).

As we mentioned above, Brooks and Makover proved that their surfaces are highly connected and in particular have logarithmic diameter. However, their results are not asymptotically sharp and in fact, the methods they use cannot be expected to yield sharp results. 

The goal of this paper is to determine the asymptotic behavior of the diameter of Brooks and Makover's random surfaces. We prove:

\begin{theorem}\label{thm_main} We have the convergence in probability
\[ \frac{\mathrm{diam}(S^C_n)}{\log n} \xrightarrow[n \to \infty]{(P)} 2.\]
\end{theorem}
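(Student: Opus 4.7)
\emph{Proof proposal.} I plan to prove matching upper and lower bounds of $(2\pm\varepsilon)\log n$ in probability. Both combine combinatorial information about the cusps of $S^O_n$ with the Brooks--Makover geometric comparison between $S^C_n$ and $S^O_n$.

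The combinatorial input is that cusps of $S^O_n$ correspond to cycles of a random permutation on $6n$ symbols determined by the uniform matching defining the model. By classical Poisson--Dirichlet asymptotics --- already a key tool in Gamburd's analysis of the genus --- the two largest cusp degrees $d_1 \geq d_2$ satisfy $\log d_i \geq (1-\varepsilon)\log n$ with high probability. The geometric input is the Brooks--Makover comparison: up to a universal bi-Lipschitz factor, each cusp of degree $d$ becomes a hyperbolic ``horn'' in $S^C_n$ of depth $\log d + O(1)$, measured from the compactification point at the tip to the bounding length-$1$ horocycle.

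\emph{Lower bound.} Let $p_1, p_2$ be the tips of the horns of the two largest cusps. Any path from $p_1$ to $p_2$ in $S^C_n$ must exit the first horn (cost $\geq \log d_1 - O(1)$) and enter the second (cost $\geq \log d_2 - O(1)$), hence has length at least $\log d_1 + \log d_2 - O(1) \geq (2-\varepsilon)\log n$ with high probability.

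\emph{Upper bound.} For any $x,y \in S^C_n$, writing $\delta_x \leq \log d_x$ for the depth of $x$ inside a horn of a cusp of degree $d_x$ (with $\delta_x=0$ if $x$ lies in the thick part), and similarly for $y$, one has
\[ d(x,y) \leq \delta_x + \delta_y + d_{\mathrm{thick}}(\text{top of }x,\text{ top of }y) + O(1). \]
The worst case is $\delta_x = \log d_1,\ \delta_y = \log d_2$ at the tips of the two largest horns: both cusps occupy positive fractions of the $2n$ triangles and a direct first-moment computation shows that w.h.p.\ they share $\Theta(n)$ triangles, so the tops can be chosen in a common triangle and the thick-part term is $O(1)$, yielding $d(x,y) \leq \log d_1 + \log d_2 + O(1) \leq (2+\varepsilon)\log n$. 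Intermediate configurations are handled via an area argument: a hyperbolic ball of radius $r$ in $S^C_n$ with sufficient injectivity radius has area $\sim \pi e^r$, matching the total area $\sim 2\pi n$ at $r = \log n$, so two balls of radius $(1+\varepsilon)\log n$ centered in the thick part must overlap.

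The main obstacle is the upper bound, specifically the sharp exponential growth rate $e^r$ of balls in $S^C_n$ up to radius close to $\log n$. This reduces to a first-moment estimate showing that short closed geodesics in $S^C_n$ are rare --- an analogue of classical cycle-counting arguments for random regular graphs, adapted via the Brooks--Makover comparison to translate combinatorial loops in the random triangulation into hyperbolic geodesics in the compactified surface.
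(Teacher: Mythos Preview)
Your lower bound is correct and matches the paper's: two cusps of degree $\Theta(n)$ exist w.h.p.\ by Poisson--Dirichlet, and Brooks' comparison turns each into a disk of radius $\sim\log n$ whose centers are therefore $\sim 2\log n$ apart.

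The upper bound, however, has a genuine gap. Your decomposition $d(x,y)\le \delta_x+\delta_y+d_{\mathrm{thick}}(\text{top}_x,\text{top}_y)+O(1)$ is fine, but the claim that the tips of the two \emph{largest} horns realize the worst case is false without further work. Take $x,y$ at the tips of two horns of degree $n^{0.9}$: then $\delta_x+\delta_y\approx 1.8\log n$, these two cusps typically share \emph{no} triangle (their degrees multiply to $n^{1.8}\ll n^2$, so the expected number of shared faces is $o(1)$), and you must bound $d_{\mathrm{thick}}$ by $\approx 0.2\log n$. Your area argument cannot deliver this: to conclude that balls of radius $r$ overlap once $e^r$ exceeds the total area, you need a \emph{lower} bound $\mathrm{area}(B_r)\gtrsim e^r$ up to $r\sim\log n$. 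Absence of short geodesics only controls the injectivity radius at scale $O(1)$; exponential ball growth at the \emph{maximal} rate $e^r$ all the way to $\log n$ is essentially the statement that $h(S^C_n)\to 1$ (equivalently $\lambda_1\to 1/4$), which is a well-known open problem for this model, not something a first-moment cycle count will give you. Even the thick-part diameter itself is only $\sim\log_2 n\approx 1.44\log n$ via the random $3$-regular dual graph, so the naive decomposition can overshoot $2\log n$.

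The paper's resolution is different in spirit: rather than crossing the thick part, it uses \emph{other} horns as shortcuts. Concretely, for two vertices $v_1,v_2$ with $\deg v_1\cdot\deg v_2\ge n^{1+\varepsilon}$ it proves (by a peeling exploration) that some face is incident to both at combinatorially nearby corners; for $\deg v_1\cdot\deg v_2\le n^{1+\varepsilon}$ it finds a third vertex $v'$ with faces touching both $v_1$ and $v_2$, close enough around $v'$ that the detour through the $v'$-horn costs $\le 2\log\bigl(n^{1+2\varepsilon}/(\deg v_1\deg v_2)\bigr)$, and the three horn-depths telescope to $(2+O(\varepsilon))\log n$. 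Tiny vertices are reduced to this case by showing every vertex is within graph distance $6$ of one of degree $\ge n^{1/4}$. The short summary: paths realizing the upper bound live almost entirely inside horns, not in the bulk, and establishing the requisite combinatorial adjacencies is where the actual work lies.
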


What we see is that, perhaps somewhat surprisingly, Brooks and Makover's random surfaces miss the minimal possible diameter by a factor of $2$. This is in stark contrast to the case of regular graphs: Bollob\'as and Fernandez-de la Vega \cite{BFdlV} proved that the diameter of a random trivalent graph on $n$ vertices is concentrated around $ \log_{2} n$, which is the smallest possible diameter for such graphs.

Since a lower bound on the diameter of a surface gives rise to an upper bound on its Cheeger constant and spectral gap, our theorem also gives rise to such bounds. Curiously, because the factor in our theorem is equal to $2$, the bound one obtains is the classical bound 
\[h(S_n^C) \stackrel{(P)}{\leq} 1+o(1) \quad \text{as }n\to\infty.\]
In particular, even if Brooks and Makover's random surfaces are not optimal for the diameter problem, they might still very well be optimizers for the other two problems.

\begin{figure}[!h]
 \begin{center}
 \begin{overpic}[scale=0.95]{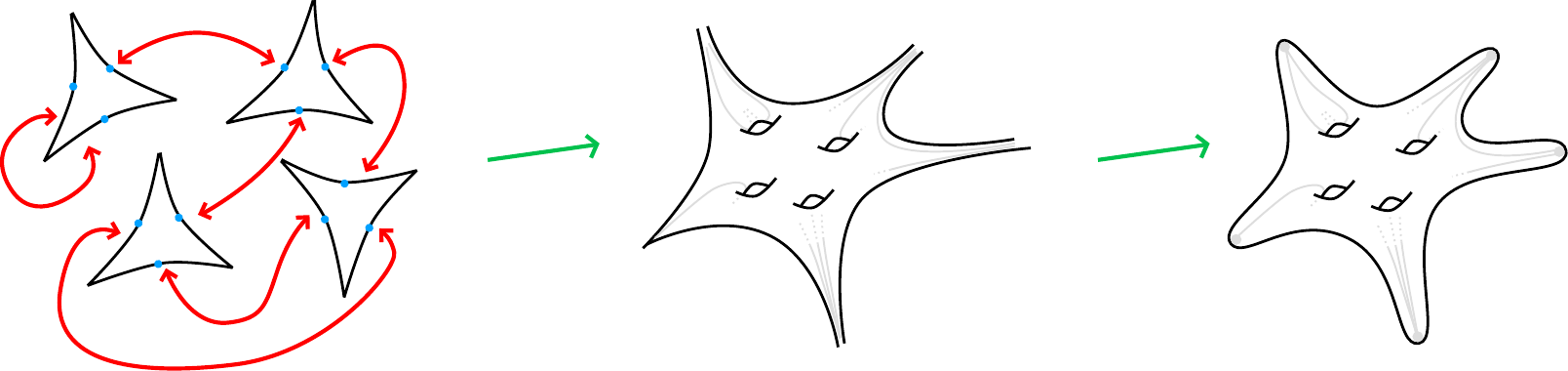}
 \put(33,15.5){(1)}
 \put(72,15.5){(2)}
 \put(57,5){$S^O_n$}
 \put(94.5,5){$S^C_n$}
 \end{overpic}
 \caption{The Brooks-Makover construction of random surfaces. 1) Glue uniformly $2n$ ideal hyperbolic triangles along their sides (with shear $0$ and in an orientable fashion). The resulting random surface $S_{n}^{O}$ is connected with high probability and has approximately $\log n$ cusps corresponding to the vertices of the corresponding triangulation. 2) After putting back those points we get a closed Riemann surface which can be uniformized and yields a random hyperbolic closed surface $S_{n}^{C}$ of genus approximately $n/2$.}\label{fig_compactification}
 \end{center}
 \end{figure}
 
\paragraph{A word on the proof.} 
Our proof is a combination of hyperbolic geometry (to control the change in the geometry during the compactification process) and ``peeling'' exploration techniques yielding combinatorial estimates on the triangulation. More precisely, the combinatorics of the dual of $S_{n}^{O}$ is given by a random three-regular graph, the geometry of which is well understood. However, $S_{n}^{O}$ is a hyperbolic surface with roughly $\log n$ cusps (vertices) and, to pass on to $S_{n}^{C}$, we need to compactify it. To understand the effect of the compactification on the geometry, we heavily rely on Brooks's Theorem \cite{Brooks, BM04}, which controls the effect of the compactification sufficiently far from the cusps. Roughly speaking, the cusps of degree $d$ are transformed after compactification into hyperbolic disks of radius $\log d$, and the metric we obtain is made by just identifying the boundaries of all these disks. Using the fact that there exist two cusps with degree proportional to $n$ (see \cite{Gamburd,ChmutovPittel, BCP19a}) with high probability, this already gives the lower bound $  \mathrm{Genus}( S_{n}^{C}) \gtrsim 2 \log n$. For the upper bound, we need to understand how those disks of logarithmic radii are glued back together on the ``bulk'' to form $S_{n}^{C}$. The caricature is that those disks are glued back in a very dense fashion so that many points on their boundaries get close, as sketched on Figure \ref{BrooksSketch}. Making the last sentence rigorous requires to develop quantitative geometric estimates on the random triangulation $ T_{n}$ underlying $S_{n}^{O}$. This is carried out using peeling exploration techniques as developed in \cite{BCP19a}. These estimates (Proposition \ref{combi_estimate_large_vertices}, \ref{combi_estimate_small_vertices} and \ref{combi_estimate_tiny_vertices}) are interesting in their own right since they sharpen our understanding of the geometry of a random triangulation and shed some light on our conjecture in \cite{BCP19a}.

Finally, let us compare the ideas of the proof here with those of \cite{BCP19b}, which also consist of a mixture of probabilistic and geometric arguments. An important difference is that in \cite{BCP19b}, the surface is built from compact pants. Hence, the diameter of such surfaces is essentially the same (up to a constant additive error) as the maximal distance between the centers of the different pairs of pants. In the present work, the building blocks (ideal triangles) are not compact, so this estimate won't directly work. Actually, with the same arguments as in \cite{BCP19b}, the maximal distance between the centers of the ideal triangles can be shown to be asymptotic to $\log(n)$ as $n\to \infty$. So, a posteriori, Theorem \ref{thm_main} also tells us that the diameter is not realized by the centers of the triangles and that the approach from \cite{BCP19b} cannot work for Brooks and Makover's random surfaces.

This also implies that the way we build paths to bound distances is very different here: while the geodesics that realize the diameter in \cite{BCP19b} needed to ``use all the surface'', the paths we will consider here lie mostly in the disks around the vertices of the triangulation, crossing the ``bulk'' (i.e. the yellow part of Figure \ref{BrooksSketch}) only a few times. On the other hand, the peeling explorations used in \cite{BCP19b} and the present paper are of a similar flavor, in the sense that they both try to connect two faces in a ``short'' way for some combinatorial distance.

\begin{figure}[!h]
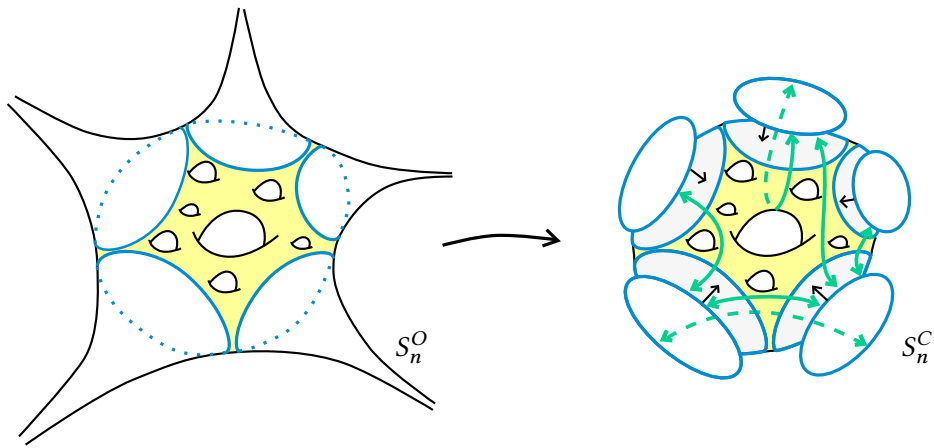

 \begin{center}
 \begin{overpic}{pic_caricature}
 \put(43,10){$S^O_n$}
 \put(99,10){$S^C_n$}
 \end{overpic}
 \caption{ \label{BrooksSketch} Caricature of the proof of the main result. Using Brooks's theorem, the surface $S_{n}^{C}$ can roughly be described by gluing hyperbolic disks of radii $\log d_{i}$ onto a dense connected bulk, where $d_{i}$ are the degrees of the vertices, or cusps of $S_{n}^{O}$. In the above figure, $S_{n}^{O}$ has five cusps. Since the bulk creates many connections (in green above) between the boundaries of those disks, the metric is at large scale driven by hyperbolic disks whose boundaries are ``identified''.}
 \end{center}
 \end{figure}

\paragraph{A conjecture on one-vertex triangulations.}
As we can see from the above sketch of proof, the fact that the diameter is $2 \log n$ is mainly due to the presence of several high degree vertices in $S_{n}^{O}$, which after compactification yield well separated points at distance $2 \log n$ from each other. We conjecture that when $S_{n}^{O}$ has a single vertex this phenomemon does not occur. More precisely, let $\tilde{S}_{n}^{O}$ be the random surface obtained by the BM model, conditioned on having a single vertex. By \cite[Appendix B]{Penner} and \cite{BV}, this is an event of probability $ \mathbb{P}( \mathrm{Genus}(S_{n}^{C})=(n+1)/2) \sim \frac{2}{3n}$ as $n \to \infty$.

\begin{conjecture} Let $\tilde{S}_{n}^{C}$ be the compactified version of $ \tilde S_{n}^{O}$. Then $ \displaystyle \frac{\mathrm{diam}(\tilde S^C_n)}{\log n} \xrightarrow[n \to \infty]{(P)} 1$ as $n \to \infty$.
\end{conjecture}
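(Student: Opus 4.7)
I would split the conjecture into a deterministic lower bound and a probabilistic upper bound, the latter being where the one-vertex conditioning genuinely enters. For the lower bound, note that $\tilde S_n^C$ has genus $(n+1)/2$, so Gauss--Bonnet gives $\mathrm{Area}(\tilde S_n^C) = 2\pi(n-1)$, while a hyperbolic ball of radius $r$ has area $4\pi\sinh^2(r/2) \leq \pi e^r$. Hence no ball of radius $r < \log(2n) - o(1)$ can cover the whole surface, so $\mathrm{diam}(\tilde S_n^C) \geq \log(2n)-o(1) = (1+o(1))\log n$ deterministically (in particular, with no conditioning issue).

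For the upper bound, in the one-vertex case the unique cusp has degree $6n$, and by Brooks's theorem its compactification produces (up to bounded metric error) a single hyperbolic disk $D \subset \tilde S_n^C$ of radius $R = \log(6n) + O(1) = \log n + O(1)$ centred at the image of the cusp, attached along $\partial D$ to a bulk $B = \tilde S_n^C \setminus D$. The key geometric observation is that, assuming $\mathrm{diam}(B) = o(\log n)$, any two points $p, q \in D$ at distances $r_p, r_q$ from the cusp satisfy
\[ d(p, q) \leq \min\!\bigl(r_p + r_q,\;(R - r_p) + (R - r_q) + \mathrm{diam}(B)\bigr), \]
the first bound coming from the triangle inequality through the cusp and the second from exiting $D$, crossing $B$, and re-entering $D$. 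Optimising the minimum over $(r_p, r_q) \in [0,R]^2$ gives diameter at most $R + \mathrm{diam}(B)/2 + O(1) = (1+o(1))\log n$; the cases where $p$ or $q$ lies in $B$ are handled analogously but are easier. The conjecture thus reduces to showing $\mathrm{diam}(B) = o(\log n)$ with high probability under the one-vertex conditioning.

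Establishing this bulk-diameter bound under the conditioning is the main obstacle. The peeling estimates of Propositions \ref{combi_estimate_large_vertices}--\ref{combi_estimate_tiny_vertices} imply unconditionally that $\mathrm{diam}(B) = o(\log n)$ with probability $1-o(1)$, but this is not strong enough to survive conditioning on the one-vertex event $\{\mathrm{Genus}(\tilde S_n^C) = (n+1)/2\}$, which has probability only $\sim 2/(3n)$. One would therefore need to either upgrade those combinatorial estimates to hold with probability $1 - o(1/n)$, or re-derive the relevant bulk geometry directly inside the one-vertex model, exploiting the fact that one-vertex triangulations are dual to $3$-regular unicellular (one-face) maps, for which explicit combinatorial bijections (of Chapuy--F\'eray--Fusy type) are available and give access to sharper concentration. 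A secondary geometric subtlety is that the naive Brooks disk has area $\sim 6\pi n$, strictly exceeding $\mathrm{Area}(\tilde S_n^C) = 2\pi(n-1)$; so the ``disk of radius $R$'' picture describes the true metric only up to the injectivity radius of $\tilde S_n^C$ at the cusp, and Brooks's theorem must be applied in a quantitatively uniform way across this entire scale rather than merely in a bounded collar of $\partial D$.
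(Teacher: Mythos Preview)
This statement is a \emph{conjecture}: the paper offers heuristics but no proof, so there is nothing to compare against. What matters is whether your outline is a viable attack.

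Your lower bound via Gauss--Bonnet and the hyperbolic volume growth is correct and clean.

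The upper bound strategy, however, contains a genuine gap. You reduce the conjecture to showing that the bulk $B = \tilde S_n^C \setminus D$ has intrinsic diameter $o(\log n)$. This is false, deterministically. The bulk $B$ is (up to $(1+\eps)$-distortion) the union of $2n$ ``truncated ideal triangles'' --- the hexagons obtained by cutting horocyclic corners of length $\alpha$ off each ideal triangle --- glued along their geodesic sides. The induced metric on $B$ is therefore quasi-isometric to the dual $3$-regular graph $G$ on $2n$ vertices, with quasi-isometry constants depending only on $\alpha$; in particular, the distance between two distinct geodesic sides of a single hexagon is bounded below by a positive constant $c_0(\alpha)$. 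Since every $3$-regular graph on $N$ vertices has diameter at least $\log_2 N - O(1)$, one gets $\mathrm{diam}(B) \geq c_0(\alpha)\,\log_2(2n) - O_\alpha(1)$, which is $\Theta(\log n)$, not $o(\log n)$. Your assertion that Propositions~\ref{combi_estimate_large_vertices}--\ref{combi_estimate_tiny_vertices} already give $\mathrm{diam}(B)=o(\log n)$ unconditionally is also incorrect: those propositions control the combinatorics of \emph{corners around vertices}, which is exactly the structure inside $D$, not the dual-graph structure governing $B$.

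Your two-term minimum then yields only $\mathrm{diam}(\tilde S_n^C) \lesssim R + \tfrac{1}{2}\mathrm{diam}(B) \approx \bigl(1 + \tfrac{c_0(\alpha)}{2\log 2}\bigr)\log n$, which falls short of the conjecture. There is a potential repair you do not mention: $c_0(\alpha)\to 0$ as $\alpha\to 0$, and in the Brooks framework $\alpha=\alpha(\eps)\to 0$ as $\eps\to 0$, so one could try to run the whole argument at fixed $\eps$, obtain $\limsup_n \mathrm{diam}(\tilde S_n^C)/\log n \leq 1 + O(\alpha(\eps))$, and then let $\eps\to 0$. For this to work one would still need $\mathrm{diam}(G) = O(\log n)$ with probability $1-o(1/n)$ (to survive the one-vertex conditioning), which is precisely the kind of obstacle you flag --- but for the dual graph diameter, not for your ``$\mathrm{diam}(B)=o(\log n)$''.

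Finally, the area ``subtlety'' is a miscalculation rather than a real obstruction. The region $N^{(2)}$ is a cusp neighbourhood bounded by a horocycle of length $6\alpha n$, and after compactification its $S^C$-area is $(1+O(\eps))\cdot 6\alpha n + O(1)$, which is strictly less than $\mathrm{Area}(\tilde S_n^C)=2\pi(n-1)$ since $\alpha<1$. The image of $N^{(2)}$ in $\tilde S_n^C$ is genuinely an embedded topological disk on which the metric is, up to $(1+\eps)$-distortion and an $O_\eps(1)$ additive error near the centre, that of a hyperbolic disk of radius $\log n + O_\eps(1)$; there is no injectivity-radius obstruction here.
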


\paragraph{Robustness.}
Finally, a natural question is to to ask whether our result can be extended to models where, instead of building our surface by gluing triangles, we start from another family of polygons as in \cite{BCP19a} \footnote{Note that here, the perimeters of the polygons must all be at least $3$ so that it makes sense to talk about ideal hyperbolic polygons.}. We believe that our arguments should still work with minor adaptations as long as all the polygons have perimeter $n^{o(1)}$. If some faces are larger than that, we expect the result to still be true, but the diameter of one face may become of order $\log n$ after compactification. Hence, it cannot be neglected anymore in the computations.

\subsection*{Acknowledgements}

We thank Maxime Fortier Bourque, Ursula Hamenst\"adt, Fr\'ed\'eric Naud, Hugo Parlier and Juan Souto for useful discussions.

\tableofcontents

\section{Geometric preliminaries}

In this section, we describe the geometry and topology of the random hyperbolic surfaces introduced by Brooks and Makover \cite{BM04}. We will assume some familiarity with the geometry of hyperbolic surfaces. For an introduction, we refer to \cite{BuserBook,Beardon}. We then recall Brooks's theorem which controls the effect of the compactification on the distances in $S_{n}^{C}$ and $S_{n}^{O}$. 

\subsection{Brooks--Makover random surfaces}
\label{sec:BM}

We start by describing the model. For $n \geq 1$, we glue $2n$ oriented ideal hyperbolic triangles along their sides in a uniform fashion. For the gluing along each pair of sides we choose a gluing with shear $0$ and suppose that the gluing respects the orientation of each triangle. This yields with very high probability as $n \to \infty$ a random connected complete hyperbolic surface with $s \approx \log n$ cusps and genus $ \approx \frac{n}{2}$, see e.g. \cite{BCP19a} and the references therein. We shall denote by $T_{n}$ the random triangulation\footnote{To be precise, we could label the edges of the triangles we glued so that $T_{n}$ is in fact a labeled map. Also, in the combinatorics literature, it is generally supposed that a map must be connected. It is not a problem here since $T_{n}$ is connected with probability $1 - O(1/n)$ as $n \to \infty$. See \cite{BCP19a} for details.} describing the combinatorics of $S_{n}^O$, so that the cusps of $S_{n}^{O}$ correspond to the vertices of $T_{n}$, which we denote by $\{v_{1}, v_{2}, ... , v_{s}\}$.

We note that, if we consider $S^O_n$ as a Riemann surface, the cusps have neighborhoods that are biholomorphic to punctured disks in the complex plane. As such, these surfaces have a natural compactification: the Riemann surface $S^C_n$ obtained by adding the points back in. The uniformization theorem now supplies us with a unique Riemannian metric of constant curvature $-1$, $0$ or $1$ on $S^C_n$. We call $S^C_n$ the \emph{conformal compactification} of $S^O_n$. Since the genus of $S^C_n$ is larger than $2$ with high probability, this metric is typically hyperbolic and will be denoted by $d_{\hyp}$. In the rest of the paper, we shall always identify $S_{n}^{O}$ and its cusps $\{v_{1}, ... , v_{s} \}$ with $S_{n}^{C}$ as point sets, but it should be clear from the context which metric we consider (either the metric $ds^2_{S^O}$ on $S_{n}^{O}$, or the metric $ds^2_{S^C}$ on $S_{n}^{C}$, or some combinatorial information about the random triangulation $T_{n}$).

\subsection{The change in geometry after compactification}

The main geometric aspect of the surfaces $S^C_n$ that we need to control is how they look near the points we added in the cusps of $S^O_n$. We will discuss this in this section. 

The main conclusion, that will use results by Brooks and Brooks-Makover, will be that up to a bounded error the metric $d_{\hyp}$ can be described as follows. First we pick horocycles of some large but fixed length $L>0$ around all the cusps in $S^O_n$. It turns out that for $n$ large enough, such horocycles typically determine disjoint neighborhoods of the cusps of $S^O_n$. Hence, we can remove these cusp neighborhoods and replace them with hyperbolic disks of perimeter $L$ (like in Figure \ref{BrooksSketch}). This gives us a closed surface homeomorphic to $S^C_n$ with a metric on it. Of course, this metric is not quite hyperbolic, and the disks do not glue very nicely on the bits of $S^O_n$. However, as we will argue in this section, this is a reasonable model for the geometry of the hyperbolic metric on $S^C_n$.

In order to formalize this description, we will use a theorem by Brooks. First, we need a definition.
\begin{definition}\label{def_cusplength}
Let $L>0$ and let $S$ be a hyperbolic surface with cusps $ v_1,\ldots,v_s$. We say that $S$ has \emph{cusp length} $\geq L$ if there exist horocycles $h_1,\ldots,h_s$ such that
\begin{itemize}
\item $h_i$ is a horocycle around $v_i$ of length larger than or equal to $L$  for all $i$,
\item $h_i$ is homeomorphic to a circle,
\item $h_i\cap h_j = \emptyset$ for all $i\neq j$.
\end{itemize}
\end{definition}

Brooks' theorem, which is an entirely deterministic result, is now as follows:

\begin{theorem}\label{thm_brooks}\cite[Theorem 2.1]{Brooks} For every $\varepsilon >0$, there exists an $L=L(\varepsilon) > 0$ such that the following holds. Let $S^O$ be a hyperbolic surface with cusps $\{v_{1}, ... , v_{s} \}$ that has cusp length $\geq L$ and denote by $S^C$ its conformal compactification. We identify $S^{O} \cup \{v_{1}, ..., v_{s}\}$ and $S^{C}$ as point sets and write $B_i(R)$ for the disk of radius $R$ around the point $v_{i}$ for the metric in $S^{C}$ and $N_i(L)$ for the cusp neighborhood defined by a horocycle of length $L$ around the cusp $v_{i}$ in $S^{O}$.  
\begin{itemize}
\item[1.] For all $i=1,\ldots,s$: 
\[B_i ({(1+\varepsilon)^{-3}R}) \subseteq N_i(L) \cup \{v_{i}\} \subseteq B_{ i}(R), \]
where $  \displaystyle R=(1+\varepsilon)^{3/2}\log\left(\frac{e^{2\pi/L}+1}{e^{2\pi/L}-1}\right)$. 
\item [2.] Outside $\cup_i B_i(R)$  we have 
          \begin{eqnarray}\frac{1}{1+\varepsilon} ds^2_{S^O} \leq ds^2_{S^C} \leq  (1+\varepsilon)ds^2_{S^O}.   \label{eq:compare}\end{eqnarray}
\end{itemize}
\end{theorem}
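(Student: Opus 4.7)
The starting observation is that, as Riemann surfaces, $S^O$ and $S^C\setminus\{v_1,\ldots,v_s\}$ coincide, so the two metrics $ds^2_{S^O}$ and $ds^2_{S^C}$ lie in the same conformal class on this common domain and are both complete hyperbolic metrics, coming from distinct uniformizations. By the Schwarz--Pick lemma applied to the holomorphic inclusion $S^O\hookrightarrow S^C$, the uniformizing metric on the ``larger'' Riemann surface is the smaller one, so $ds^2_{S^C}\leq ds^2_{S^O}$ on all of $S^O$. The upper half of (\ref{eq:compare}) is therefore automatic; the whole content of the theorem is then a sharp reverse bound outside the disks $B_i(R)$, together with the shape estimates of item~1 near each cusp.

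For item~1, I would work in a conformal chart near $v_i$. By the uniformization of a punctured-disk neighborhood, a neighborhood of $v_i$ in $S^C$ is biholomorphic to a disk $\mathbb{D}_\rho=\{|w|<\rho\}$ with $v_i$ at the origin, and on $\mathbb{D}_\rho\setminus\{0\}$ the $S^O$-metric is, up to higher-order conformal distortion, the standard cusp metric $|dw|^2/(|w|\log(\rho/|w|))^2$. In this chart the horocycle of length $L$ in $S^O$ is precisely $\{|w|=\rho\, e^{-2\pi/L}\}$, and the hyperbolic distance (in the \emph{disk} model on $\mathbb{D}_\rho$) from $0$ to that circle is $\log\frac{1+e^{-2\pi/L}}{1-e^{-2\pi/L}}=\log\frac{e^{2\pi/L}+1}{e^{2\pi/L}-1}$. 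The true $S^C$-metric differs from the disk metric on $\mathbb{D}_\rho$ by a conformal factor, which two applications of Schwarz--Pick, comparing $S^C$ with inscribed and circumscribed hyperbolic sub-disks, pinch between $(1+\varepsilon)^{\pm 3/2}$ once $L$ is large enough depending on $\varepsilon$. This is what produces the factors $(1+\varepsilon)^{3/2}$ and $(1+\varepsilon)^{-3}$ in the radii $R$ and $(1+\varepsilon)^{-3}R$.

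For item~2, I would write $ds^2_{S^O}=e^{2\varphi}\,ds^2_{S^C}$ and control the smooth nonnegative function $\varphi$ on $S^C\setminus\{v_1,\ldots,v_s\}$. Since both metrics have curvature $-1$, the conformal factor satisfies the Liouville-type equation $\Delta_{S^C}\varphi = e^{2\varphi}-1$, with prescribed blow-up $\varphi\to+\infty$ at each $v_i$ dictated by the local analysis of item~1. On the circle $\partial B_i(R)$ the boundary value of $\varphi$ is already of order $\varepsilon$: this comes from comparing the cusp-model density $1/(|w|\log(\rho/|w|))$ with the disk density $1/(1-|w|^2/\rho^2)$ at the point on $\{|w|=\rho\,e^{-2\pi/L}\}$ nearest to $v_i$ in the $S^C$-metric. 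The maximum principle for the semilinear equation on the compact region $S^C\setminus\bigcup_i B_i(R)$ then propagates this boundary smallness to the interior with the same $O(\varepsilon)$ bound, giving (\ref{eq:compare}).

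The main obstacle is carrying out the maximum-principle step with \emph{explicit} constants, independent of the ambient geometry of $S^C$: the PDE is genuinely nonlinear, so constructing barriers requires care, and one must also rule out that $\varphi$ bulges in the middle of the thick part. Brooks's original strategy bypasses the PDE entirely and uses a geometric argument built from extremal-length estimates and the Schwarz lemma applied to annular covers of the cusp neighborhoods; this is the route I would ultimately take to keep all constants fully traceable and obtain the quantitative pinching claimed in the theorem.
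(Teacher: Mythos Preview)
This theorem is not proved in the paper at all: it is quoted verbatim as \cite[Theorem 2.1]{Brooks} and used as a black box. There is therefore no ``paper's own proof'' to compare your proposal against; the authors simply import Brooks's result and move on.

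As for your sketch itself, the broad architecture is sound. The Schwarz--Pick inequality indeed gives $ds^2_{S^C}\leq ds^2_{S^O}$ for free, and the local computation in a punctured-disk chart correctly identifies where the number $\log\frac{e^{2\pi/L}+1}{e^{2\pi/L}-1}$ comes from. Your idea of writing $ds^2_{S^O}=e^{2\varphi}ds^2_{S^C}$ and invoking the curvature equation $\Delta_{S^C}\varphi=e^{2\varphi}-1$ is also the right setup. However, the step you flag as ``the main obstacle'' is exactly where your argument is incomplete: you assert that the maximum principle propagates an $O(\varepsilon)$ boundary bound into the interior, but on a domain with boundary the maximum of a solution to $\Delta\varphi=e^{2\varphi}-1$ with $\varphi\geq 0$ is attained on the boundary only because $\Delta\varphi\geq 0$ there; this part is fine. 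What you have not justified is that the boundary value of $\varphi$ on $\partial B_i(R)$ is uniformly $O(\varepsilon)$ \emph{independently of the global geometry of $S^C$}. Your local chart comparison controls the cusp density against the density of a single model disk, not against the actual $S^C$-metric, and bridging that gap is precisely the content of Brooks's extremal-length argument that you defer to in your last paragraph. So as written the proposal is an outline that correctly locates the difficulty but does not resolve it.
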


So in order to control the geometry of the compactified surface, we need to find ``large'' horocycles. To this end, we once and for all fix $\varepsilon>0$ and $L=L(\varepsilon)$ given by Theorem \ref{thm_brooks}. In \cite[Theorem 2.1]{BM04}, Brooks and Makover prove that
\begin{equation}\label{eq:cusp_length}
\PP(\text{the surface }S^O_n\text{ has cusp length }\geq L) \xrightarrow[n \to \infty]{} 1.
\end{equation}

Let us look at the argument in \cite{BM04}. In big lines, this runs as follows. First of all, one can draw horosegments of length $1$ on all the triangles that match up into horocycles around the cusps that don't intersect each other. The resulting horocycle around a cusp with $d$ triangles around it has length $d$. This means that the only problem are short cycles in the dual graph of the triangulation. The solution is to push the resulting horocycles out in order to make them longer. Of course, in order to ensure that the collection of horocycles remains disjoint, the other horocycles need to be shrunk. The reason why this can be done is that short cycles are far away from each other in the dual graph (a result due to Bollob\'as \cite{Bollobas}).

Arbitrarily labeling the cusps of $S^O_n$ by $v_1,\dots, v_s$, we denote by $h^{(1)}_j$ the horocycles of length $L$ around the cusps $v_j$ given by \cite{BM04}. The horocycle neighborhoods they determine will be denoted $N^{(1)}_j \equiv N_{j}(L)$. The fact that these horocycles are disjointly embedded allows us to apply  Theorem \ref{thm_brooks}.

\subsection{Canonical horocycles}
The above result enables us to understand, with high probability, the geometry of $S_{n}^{C}$ by replacing the horocycle neighborhoods $N_{j}^{(1)}$ with hyperbolic disks of perimeter $L$, hence radius roughly $\log L$. In the case of a cusp of large degree $d >> L$, this control is not sufficient: recall from the introduction that we want to replace a horocycle neighborhood with a disk of radius roughly $\log d$. To do this, we shall consider a second set of horocycles $h^{(2)}_j$ around the cusps $v_j$ that determine horocycle neighborhoods $N^{(2)}_j$ around $v_{j}$ for  $j=1,\ldots, s$. We build these out of all the horocycle segments of some fixed length $\alpha=\alpha(\varepsilon)$ in the ideal triangles we started with. The value of $\alpha$ will be specified a few lines below. Note that this means that in $S_{n}^{O}$
\[  \mathrm{Length}(h^{(2)}_j) = \alpha \cdot d_j,\]
where $d_j$ denotes the degree of the cusp $v_j$ (i.e. the degree of the corresponding vertex in $T_{n}$).

The choice of $\alpha$ will be constrained by the following two conditions, that we want our horocycles $h_j^{(2)}$ to satisfy:
\begin{enumerate}
\item they have to be disjoint;
\item if $i\neq j$ then we want $N_j^{(2)}\cap N_i^{(1)} = \emptyset$.
\end{enumerate}

Both of these are conditions on $\alpha$. The first of these conditions is satisfied as long as $\alpha \leq 1$. In order to guarantee the second condition, we need to make the process of shrinking horocycles described right after \eqref{eq:cusp_length} somewhat quantitative.

The worst case for the process described above is a loop in the dual graph, i.e. an ideal triangle of which two sides are identified in the gluing. Figure \ref{pic_horocycles2} shows a picture of an ideal triangle with vertices $0$, $1$ and $\infty$ in the hyperbolic plane, where we try to build a horocycle around the cusp at $\infty$.

\begin{figure}[h]
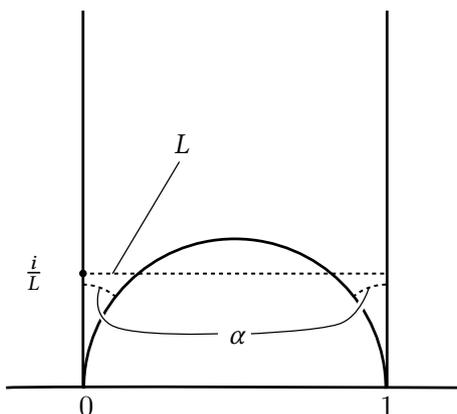

  \begin{center}
  \begin{overpic}{pic_horocycles2}
   \put (4,24) {$\frac{i}{L}$}
   \put (37,52) {$L$}
   \put (49,10) {$\alpha$}
   \put (16,-6) {$0$}
   \put (82,-6) {$1$}
  \end{overpic}
  \caption{Horocycles around the cusp consisting of a single triangle in the half-plane model. The vertical left and right boundaries are identified in $S_{n}^{O}$.}\label{fig_horocycle_one_triangle}
  \label{pic_horocycles2}
  \end{center}  
\end{figure}

In order to obtain a horocycle of length $L$ around such a cusp, we need to use the projection of the horosegment at height $1/L$. So $\alpha=\alpha(\varepsilon)\in (0,1)$ is chosen so that the two horosegments of length $\alpha$ around the other two vertices of our triangle are disjoint from (and below) the horosegment at height $1/L$. Note that this means that $\alpha$ depends on $L$ and hence $\varepsilon$ only. Since the geometry of $N_j^{(2)}$ is entirely determined by how many triangles are incident to the corresponding cusp, this allows us to translate combinatorial properties of the triangulation into geometric properties of the "combinatorial model" of Figure \ref{BrooksSketch}, and hence of the surface $S^O_n$.

\subsection{Rough geometric estimates}
\label{sec:geo}
Let us derive some rough geometric estimates on the hyperbolic metric $ d_{ \mathrm{hyp}} $ on $S_{n}^{C}$ using the above constructions. Recall that $ \varepsilon>0$ is fixed, $L=L( \varepsilon)$ and $R=R( \varepsilon,L)$ are given in Theorem \ref{thm_brooks}, and $\alpha= \alpha( \varepsilon)$ is given in the last subsection. We suppose that $S_{n}^{O}$ is connected, has genus larger than $2$, has cusps $v_{1}, ... , v_{s}$ and has cusp length larger than $L$ (all of this happens with high probability as $n \to \infty$). We recall the notation $N^{(1)}_{j} = N_{j}(L)$ and $N^{(2)}_{j}=N_{j}( \alpha \cdot d_{j})$ for the cusp neighborhoods around $v_{j}$ of length $L$ and $ \alpha \cdot d_{j}$ respectively. The setup is summarized in  Figure \ref{fig:horos}.

\begin{figure}[!h]
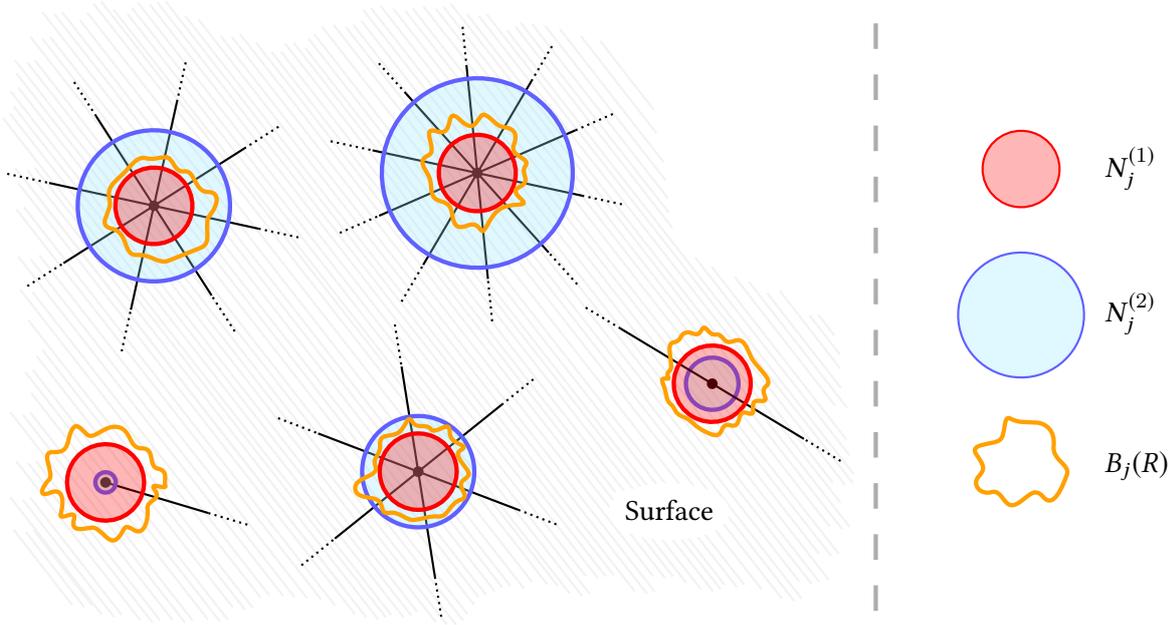

 \begin{center}
 \begin{overpic}{pic_setupgeo}
 \put(51.5,9){Surface}
 \put(91,37.5){$N^{(1)}_j$}
 \put(91,25.5){$N^{(2)}_j$}
 \put(91,13){$B_j(R)$}
 \end{overpic}
 \caption{ \label{fig:horos}Setup of our geometric estimates here for a surface with $5$ cusps of degrees $8,10,2,6$ and $1$. The red horocycles have length $L$, they exist with probability tending to $1$ as $n \to \infty$ thanks to \cite{BM04} and enable us to apply Theorem \ref{thm_brooks}. Their neighborhoods are contained in the orange balls $B_{j}(R)$ for the metric on $S_{n}^{C}$. The blue horocycles have length $ \alpha \cdot d_{j}$, where $\alpha>0$ is chosen as before. For small degrees those cusp neighborhoods are smaller than the red ones, and vice versa for large degrees.  }
 \end{center}
 \end{figure} 

Let us make a few geometric remarks about the \emph{metric $ d_{ \mathrm{hyp}}$ in $S_{n}^{C}$} as $n \to \infty$:
\begin{enumerate}
\item each of the orange regions $B_{j}(R)$ has diameter bounded by $2R$,
\item any $x \in S_{n}^{C}$ is within bounded distance of the union $ \cup N_{j}^{(2)}$ of the blue cusp neighborhoods. Indeed this is true for the metric in $S^{O}_{n}$, and since by \eqref{eq:compare} those two metrics are comparable outside $\cup B_{j}(R)$, the statement follows from the last point.
\item \textbf{Lower bound.} For $i \ne j$, the distance in $S_n^C$ between $v_{i}$ and $v_{j}$ satisfies 
 \begin{eqnarray} \label{eq:lb} {d}_{ \mathrm{hyp}}(v_{i}, v_{j}) \geq (1- \varepsilon) \left(\log ( \alpha \cdot d_{i}) + \log ( \alpha \cdot d_{j})\right) - 4R.  \end{eqnarray}
Indeed,  \eqref{eq:compare} and the first point of the list enable us to see, up to a multiplicative error $(1 \pm \varepsilon)$ and an additive error $ \pm 2R$, each blue region $N^{(2)}_{j}$ as a hyperbolic disk of radius $\log (\alpha \cdot d_{j})$. The distance between their centers is then bounded from below by the quantity in the last display.
\item \textbf{Upper bounds.} We shall also need an estimate on the distance between two points $x_{1},x_{2} \in S_{n}^{C}$ which lie in a same blue neighborhood $N^{(2)}_{j}$. Let us zoom in on such a neighborhood (see Figure \ref{fig:geocusp} with the same drawing convention as in Figure \ref{fig:horos}).
\begin{figure}[!h]
 \begin{center}
 \begin{overpic}[width=7cm]{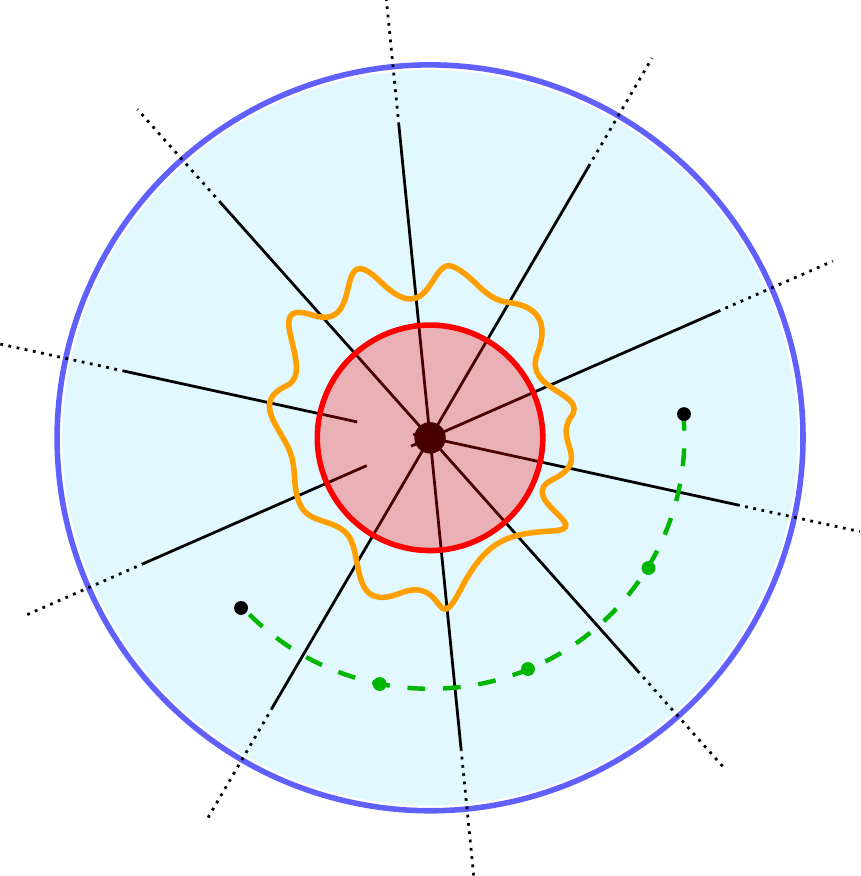}
 \put(24,26){$x_1$}
 \put(80,51){$x_2$}
 \put(42,48.5){$v_j$}
 \end{overpic}
 \caption{ \label{fig:geocusp} Two points $x_{1},x_{2}$ lying in the same cusp neighborhood $N_{j}^{(2)}$. One can evaluate their distances within this region using their distances to the horocycle and the combinatorial dual distance between their corners (which is $4$ in this example).}
 \end{center}
 \end{figure}
 If $x_{1},x_{2}$ belong to a common cusp neighborhood $N_{j}^{(2)}$, one can define the combinatorial dual distance between $x_{1}$ and $x_{2}$ as  the number of half-edges of $T_{n}$ that one needs to cross when moving around $v_{j}$ from $x_1$ to $x_2$ in the shortest direction. This distance only depends on the corners $c_{1},c_{2}$ associated to  $x_{1},x_{2}$ and we shall denote it by $d_{v_{j}}(c_{1},c_{2})$. Note that once the horocycles $h^{(2)}_j$ have been defined, the corners can be canonically defined as the "triangles" delimited by two sides of a face of $T_{n}$ and a portion of horocycle. Recalling from above that up to a bounded additive error $\pm 2R$ and up to a multiplicative error $1 \pm \varepsilon$, the region $N^{(2)}_{j}$ can be seen as a hyperbolic disk of radius $\log ( \alpha \cdot d_{j})$. Furthermore, the boundaries of the triangles can be seen as radii of this disk such that two consecutive of them make an angle $\frac{2\pi}{d_j}$. Therefore, the angle between the three points $x_1$, $v_j$ and $x_2$ is close to $\frac{2\pi}{d_j} d_{v_j}(c_1,c_2)$. Using elementary geometry (more precisely, the hyperbolic cosine law), we deduce that if $d_{\hyp} (x_1,h_j^{(2)}) = d_{\hyp} (x_2,h_j^{(2)})$, then\footnote{We could estimate this distance for any two points in $N_j^{(2)}$, but we will not need the general case, and the formula is simpler if $d_{\hyp} (x_1,h_j^{(2)}) = d_{\hyp} (x_2,h_j^{(2)})$.}
 \begin{eqnarray} d_{ \mathrm{hyp}}(x_{1},x_{2}) &\leq& 2(1+\varepsilon) \; \max\Big\{ \log(d_{v_j}(c_1,c_2)) - d_{\hyp} (x_1,h_j^{(2)}), \; 0   \Big\} + 2R.   \label{eq:ub} \end{eqnarray}
Finally, let $x_1$ and $x_2$ belong to the same face $f$, but to two different cusp neighbourhoods $N_i^{(2)}$ and $N_j^{(2)}$. Then any two points on $h_i^{(2)} \cap f$ and $h_j^{(2)} \cap f$ lie at bounded distance (by a constant $C(\eps)$) from each other. Hence, we have
\begin{equation}\label{eq:ub2}
d_{\hyp}(x_1,x_2) \leq (1+\eps) \left( d_{\hyp} (x_1,h_i^{(2)}) + d_{\hyp} (x_2,h_j^{(2)})\right) + 4R + C.
\end{equation}
\end{enumerate}

\section{Proof of the theorem given combinatorial estimates}
Let us now prove our main theorem relying on some combinatorial estimates on the random triangulation $T_{n}$ that will be proved in the last section. We divide the proof into lower and upper bound.
\subsection{Lower bound}
\begin{proof}[Proof of the lower bound of Theorem \ref{thm_main}] The crucial observation is that with high probability, there are at least two vertices of degree proportional to $n$. More formally, if $D_{n}(1), D_{n}(2), ... $ are the vertex degrees ranked in decreasing order, then by \cite{Gamburd} (see also \cite{ChmutovPittel,BCP19a}), we have the convergence in distribution $$\left(  \frac{D_{n}(1)}{6n}, \frac{D_{n}(2)}{6n}, ... \right) \xrightarrow[n\to\infty]{(d)}  \mathbf{PD},$$ where $\mathbf{PD}$ is the Poisson--Dirichlet distribution with values in the infinite simplex $\{x_{1} \geq x_{2} \geq \cdots > 0 :  \sum_i x_i = 1\}$. Therefore, for every $\eps>0$, there is $\delta>0$ such that, for $n$ large enough, we have
\[ \PP \left( D_n(2) \geq \delta n \right) \geq 1-\eps.\]
Reasoning on the above event intersected with the conditions imposed in the beginning of Section \ref{sec:geo}, we deduce thanks to \eqref{eq:lb} that on this event (of asymptotic probability larger than $1 - \varepsilon$) we have as desired 
$$ \mathrm{Diam}( S_{n}^{C}) \geq  2(1- \varepsilon) \log ( \alpha \cdot  \delta n) - 4R(\varepsilon) \underset{n \to \infty}{\sim} 2 (1- \varepsilon) \log n.$$ \end{proof}

\subsection{Upper bound}
For the upper bound, we shall use combinatorial estimates on the random triangulation $T_{n}$ which will be proved in the Section \ref{sec:proof_combi}.
\paragraph{Combinatorial estimates} 
\begin{proposition}\label{combi_estimate_large_vertices}

Let $\eps \in (0,1)$. With high probability as $n \to \infty$, the following holds. For any two corners $c_1, c_2$ of $T_n$ of two faces $f_1, f_2$ incident to two vertices $v_1, v_2$ such that $\deg(v_1) \deg(v_2) \geq n^{1+\eps}$, there is a face $f'$ incident both to $v_1$ at a corner $c'_1$ and to $v_2$ at a corner $c'_2$, and such that
\[d_{v_1}(c_1,c'_1) \leq 3n^{\beta_1/(\beta_1+\beta_2-\eps/2)} \quad \mbox{ and } \quad d_{v_2}(c_2,c'_2) \leq 3n^{\beta_2/(\beta_1+\beta_2-\eps/2)},\]
where $\beta_i$ is such that $\mathrm{deg}(v_{i}) = n^{\beta_{i}}$  for $i \in \{1,2\}$.
\end{proposition}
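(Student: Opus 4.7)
The plan is to establish the proposition for a fixed tuple $(v_1,v_2,c_1,c_2)$ with failure probability super-polynomially small and then conclude by a union bound over the at most $(6n)^2$ such tuples. Fix the tuple satisfying $\beta_1+\beta_2\geq 1+\eps$ and set $k_i := n^{\beta_i/(\beta_1+\beta_2-\eps/2)}$, noting the key inequality
\[
\frac{k_1 k_2}{n} \;=\; n^{(\eps/2)/(\beta_1+\beta_2-\eps/2)} \;\geq\; n^{\eps/4},
\]
which uses $\beta_1+\beta_2\leq 2$.

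I would work in the configuration model for $T_n$ (a uniformly random matching of the $6n$ sides of the $2n$ labelled triangles) and carry out a joint peeling exploration, in the spirit of \cite{BCP19a}. First, peel the fan of $v_2$ around $c_2$ for $k_2$ steps in one cyclic direction; this exposes $k_2+1$ triangles incident to $v_2$, with exactly $k_2+3$ unpaired boundary half-edges (two located at $v_2$ forming the fan's ends, and $k_2+1$ on the far sides of the fan triangles). Then peel the fan of $v_1$ around $c_1$ for $k_1$ steps, each step pairing the current boundary half-edge at $v_1$ with a uniformly random unpaired half-edge of the triangulation.

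The key observation is that if at some step $j\leq k_1$ the partner falls on any of the $k_2+3$ boundary half-edges of $v_2$'s fan, then the resulting orientable vertex identifications yield a face $f'$ simultaneously incident to $v_1$ at cyclic distance $\leq k_1$ from $c_1$ and to $v_2$ at cyclic distance $\leq k_2+1$ from $c_2$ (well within the $3k_i$ bound stated, the factor $3$ absorbing off-by-ones and cyclic-direction slack). This splits into two subcases: if the partner is one of the two ``at-$v_2$'' end half-edges, the newly peeled triangle already has corners at both $v_1$ and $v_2$; if instead the partner is on the far side of a $v_2$-fan triangle $T'$, then the orientable matching merges $v_1$'s vertex class with one of $T'$'s non-$v_2$ corners, so $T'$ itself becomes a shared face with corners at the desired positions.

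Throughout the exploration at most $2(k_1+k_2)$ pairings have been revealed, so the total number of unpaired half-edges stays above $5n$, and the count of unpaired half-edges on $v_2$'s fan boundary remains $\geq k_2$ until the first collision. Hence the probability of a collision at each $v_1$-peeling step is at least $k_2/(5n)$, and the probability of no collision over the $k_1$ steps is at most
\[
\Bigl(1-\tfrac{k_2}{5n}\Bigr)^{k_1} \;\leq\; \exp\!\bigl(-\tfrac{k_1 k_2}{5n}\bigr) \;\leq\; \exp(-n^{\eps/4}/5),
\]
comfortably beating the $(6n)^2$ union bound. The main obstacle is the careful bookkeeping of vertex identifications in the configuration model needed to justify the key observation above; a secondary technical point is to handle the rare ``self-collision'' events where $v_1$'s peeled partner falls on $v_1$'s own fan boundary (each such event shrinks the boundary by $2$ instead of growing it by $1$), but their expected number is $O(k_1^2/n)=o(k_1)$ in our regime (using $\beta_1/(\beta_1+\beta_2-\eps/2)<1$), so with high probability the exploration can be completed and the collision-probability bound above is preserved.
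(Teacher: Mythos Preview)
Your strategy is essentially the paper's: grow a target region by peeling around one vertex, then peel around the other and wait for a collision, with the product $k_1k_2/n$ large enough to make the no-collision probability stretched-exponentially small. Two points, however, are genuine gaps rather than mere sketchiness.

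\textbf{The parameters $k_i$ are not available to the algorithm.} You write ``Fix the tuple $(v_1,v_2,c_1,c_2)$ satisfying $\beta_1+\beta_2\geq 1+\eps$ and set $k_i:=n^{\beta_i/(\beta_1+\beta_2-\eps/2)}$''. But once $c_1,c_2$ are fixed, the degrees $d_i=\deg(v_i)$ (and hence $\beta_i$) are random outputs of the matching; an exploration that decides in advance to run for $k_2$ steps around $v_2$ is using information it has not yet revealed, so the ``partner edge is uniform among the remaining half-edges'' property is lost. The paper resolves this by also fixing \emph{target} values $(d_1,d_2)$ and proving that
\[
\PP\big(\deg(v_1)=d_1,\ \deg(v_2)=d_2,\ \text{and no }f'\text{ exists}\big)=o(n^{-4}),
\]
with the algorithm depending on the targets; an early closure of $v_i$ then simply rules out $\deg(v_i)=d_i$. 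This costs an extra $(6n)^2$ in the union bound, which your stretched-exponential estimate still beats, but as written your $(6n)^2$-tuple union bound leaves the $k_i$ undefined.

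\textbf{``With high probability'' is not enough after the union bound.} For the collision estimate you need the boundary of the first-explored fan to have order $k_2$ with probability $1-o(n^{-4})$, not merely $1-o(1)$. Your first-moment remark ``expected number of self-collisions is $O(k_1^2/n)=o(k_1)$'' yields via Markov only a polynomially small failure probability (of order $k_i/n\geq n^{-1+\Omega(1)}$ in the worst case), which does not survive the union bound. This also invalidates the claim that the $v_2$-fan has ``exactly $k_2+3$ unpaired boundary half-edges''. The paper closes this gap with Azuma: writing $P_i$ for the boundary length of the first component, one has $P_{i+1}-P_i\in\{-2,+1\}$ with drift $>1/2$, so $P_{\ell_1}\geq \ell_1/4$ except with probability $\exp(-\Omega(\ell_1))=\exp(-\Omega(n^{\eps/2}))$. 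The same argument repairs your version; the point is just that a concentration inequality, not a first-moment bound, is what is actually needed here.
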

\begin{figure}[h]
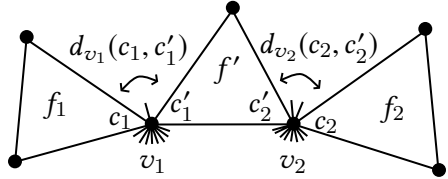

  \begin{center}
  \begin{overpic}{pic_largevertices}
   \put (8,15) {$f_1$}
   \put (23,12) {$c_1$}
   \put (37,15) {$c'_1$}
   \put (70,11) {$c_2$}
   \put (55,15) {$c'_2$}
   \put (30,2) {$v_1$}
   \put (85,14) {$f_2$}
   \put (62,2) {$v_2$}
   \put (47,23) {$f'$}
   \put (14,28) {$d_{v_1}(c_1,c'_1)$}
   \put (57.5,28.5) {$d_{v_2}(c_2,c'_2)$}       
  \end{overpic}
  \caption{Proposition \ref{combi_estimate_large_vertices}: two corners incident to ``large'' vertices share a touching face.}
  \label{pic_largevertices}
  \end{center}  
\end{figure}

\begin{proposition}\label{combi_estimate_small_vertices}
Let $\eps \in (0,1)$. With high probability as $n \to \infty$, the following holds.  For any two vertices $v_1$, $v_2$ of $T_n$ such that $\deg(v_1) \deg(v_2) \leq n^{1+\eps}$ but $\deg(v_1), \deg(v_2) \geq n^{2\eps}$, one of the two following assertions hold:
\begin{enumerate}
\item
there is a face $f''$ incident to both $v_1$ and $v_2$,
\item
there are a vertex $v'$ and two faces $f'_1$ and $f'_2$ such that:
\begin{itemize}
\item
$f'_1$ is incident to both $v_1$ and $v'$ (at a corner $c'_1$);
\item
$f'_2$ is incident to both $v_2$ and $v'$ (at a corner $c'_2$);
\item
$d_{v'}(c'_1, c'_2) \leq \frac{n^{1+2\eps}}{\deg(v_1) \deg(v_2)}$.
\end{itemize}
\end{enumerate}
\end{proposition}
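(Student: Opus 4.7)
The strategy is to route through a single ``large'' auxiliary vertex $v^*$ of $T_n$ that will serve as $v'$ in case (2), simultaneously for every admissible pair $(v_1,v_2)$. From $\deg(v_1)\deg(v_2)\leq n^{1+\eps}$ and $\deg(v_i)\geq n^{2\eps}$ we deduce $\deg(v_1),\deg(v_2)\leq n^{1-\eps}$. Since $T_n$ has $O(\log n)$ vertices of total degree $6n$ (with high probability), its maximum-degree vertex $v^*$ satisfies $\deg(v^*)\geq 6n/(C\log n)\gg n^{1-\eps}$, so $v^*\notin\{v_1,v_2\}$ for every admissible pair and the same $v^*$ can be used for all of them.

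Next, view $T_n$ as a uniform matching of the $6n$ sides of the $2n$ labelled triangles (the configuration model underlying Brooks--Makover). Using a peeling exploration in the spirit of \cite{BCP19a}, we reveal the stars of $v_1$ and $v_2$ (the faces incident to each) and condition on this data; the remaining $3n-\deg(v_1)-\deg(v_2)$ pairings are then uniform. Each face in the star of $v_1$ has two non-$v_1$ corners (similarly for $v_2$), and the residual matching identifies each such corner with some vertex of $T_n$. Let $X$ (resp.\ $Y$) be the number of non-$v_1$ corners of faces in the star of $v_1$ (resp.\ non-$v_2$ corners of faces in the star of $v_2$) that are identified with $v^*$. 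A direct configuration-model calculation gives $\E[X]=(1+o(1))\deg(v_1)\deg(v^*)/(3n)$, and analogously for $\E[Y]$, with variances of the same order. By Chebyshev, $X\geq\tfrac{1}{2}\E[X]$ and $Y\geq\tfrac{1}{2}\E[Y]$ with probability $1-o(\log^{-3}n)$, which survives the eventual union bound.

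Conditional on the identities of the $X+Y$ corners lying in $v^*$'s orbit (of length $\deg(v^*)$), their cyclic positions around $v^*$ are uniformly distributed, by symmetry of the configuration model. The expected number of pairs $(c'_1,c'_2)$ with $c'_1$ a ``star-of-$v_1$ corner'' and $c'_2$ a ``star-of-$v_2$ corner'' at cyclic distance $\leq B:=n^{1+2\eps}/(\deg(v_1)\deg(v_2))$ around $v^*$ is at least
\[
X\cdot Y\cdot\frac{2B}{\deg(v^*)}\;\gtrsim\;\frac{\deg(v^*)\,n^{2\eps-1}}{9}\;\longrightarrow\;+\infty,
\]
and a second-moment argument, combined with the concentration of $X,Y$, produces at least one such pair with probability $1-o(\log^{-2}n)$, giving case (2) with $v'=v^*$. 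A final union bound over the $O(\log^2 n)$ admissible pairs completes the proof. The main obstacle is translating these configuration-model heuristics into rigorous statements inside the peeling framework of \cite{BCP19a}: one must control the concentration of $X$ and $Y$ (whose defining indicators are correlated through the shared pool of unmatched sides) and the uniformity of cyclic positions on $v^*$'s orbit, with quantitative rates good enough to survive the union bound --- precisely the kind of fine combinatorial estimate the peeling technology is designed to produce, via a martingale-style analysis of the residual matching.
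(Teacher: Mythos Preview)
Your plan is a genuinely different route from the paper's, but as written it has a real gap, not just a technical obstacle.

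The core problem is the conditioning structure. You want to (i) fix the maximum-degree vertex $v^*$, (ii) reveal the stars of $v_1$ and $v_2$, and then (iii) treat the residual matching as uniform. But $v^*$ is a functional of the \emph{entire} matching: you cannot know which vertex is $v^*$, nor its degree $\deg(v^*)$, after revealing only the stars of $v_1,v_2$. Conversely, if you condition on enough to determine $v^*$ and its corner set, you have destroyed the residual uniformity you need for the first- and second-moment computations on $X,Y$. The sentence ``their cyclic positions around $v^*$ are uniformly distributed, by symmetry of the configuration model'' is where this bites hardest: the cyclic order of corners around a vertex of $T_n$ is the cycle of a permutation of the form $\rho^{-1}\sigma$ built from the fixed triangle rotation $\rho$ and the matching $\sigma$; once you condition on \emph{which} corners form that cycle (plus the stars of $v_1,v_2$), there is no automorphism of the model that permutes the cyclic positions while fixing the conditioning, so ``by symmetry'' does not apply. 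You acknowledge this as an obstacle, but it is the heart of the argument, not a routine matter of peeling bookkeeping. A secondary issue is the union bound: the $O(\log^2 n)$ pairs $(v_1,v_2)$ are random objects (vertices of $T_n$ are not labelled a priori), so you still need to reparametrize by corners and degrees as the paper does, at which point the required failure probability becomes polynomial in $n^{-1}$ rather than polylogarithmic.

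For contrast, the paper avoids fixing any global $v^*$. It runs a three-phase peeling exploration: first peel around $v_1$ until closure (Phase~1), then around $v_2$ until closure (Phase~2), then try the boundary neighbours of $v_1$ one at a time (the ``red'' vertex), peeling each for exactly $n^{1+\eps}/(d_1 d_2)$ steps before moving to the next (Phase~3). The Markov property of peeling gives, at every step of Phase~3, a clean conditional probability $\gtrsim d_2/n$ of hitting the component of $v_2$; since there are $\gtrsim d_1$ candidate red vertices, each explored for $n^{1+\eps}/(d_1 d_2)$ steps, the total success probability is $1-\exp(-c\,n^{\eps})$, well under the $o(n^{-4})$ needed for a union bound over corners and degrees. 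The bound on $d_{v'}(c'_1,c'_2)$ then falls out automatically from the cap on the number of steps each red vertex is explored. This dynamic approach never needs to know the global vertex structure, which is exactly what makes the conditioning clean.
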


\begin{figure}[h]
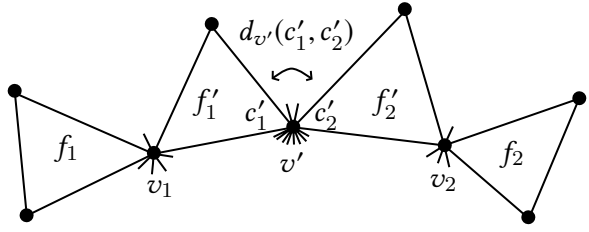

  \begin{center}
  \begin{overpic}{pic_smallvertices}
   \put (8,12) {$f_1$}
   \put (41,18) {$c'_1$}
   \put (53,18) {$c'_2$}
   \put (24,5.5) {$v_1$}    
   \put (32,20) {$f_1'$}
   \put (85,11) {$f_2$}
   \put (73,6.5) {$v_2$}
   \put (63,20) {$f_2'$}
   \put (47,9) {$v'$}
   \put (40,32) {$d_{v'}(c'_1,c'_2)$}     
  \end{overpic}
  \caption{Illustration of the second option in Proposition \ref{combi_estimate_small_vertices}.}
  \label{pic_smallvertices}
  \end{center}  
\end{figure}

\begin{proposition}\label{combi_estimate_tiny_vertices}
With high probability as $n \to \infty$, every vertex of $T_n$ is at graph distance at most $6$ from a vertex of degree at least $n^{1/4}$.
\end{proposition}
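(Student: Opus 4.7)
The plan is to prove the much stronger statement that, with high probability, every vertex of $T_n$ is at graph distance at most $1$ (and hence at most $6$) from a vertex of degree $\ge n^{1/4}$. Write $V_L=\{v\in V(T_n):\deg(v)\ge n^{1/4}\}$ and $V_S=V(T_n)\setminus V_L$. By the Poisson--Dirichlet description of the vertex degree structure of $T_n$ (see \cite{Gamburd,ChmutovPittel,BCP19a}), with high probability $|V(T_n)|=O(\log n)$, and since every $v\in V_S$ has $\deg(v)<n^{1/4}$, the total number $L:=\sum_{v\in V_S}\deg(v)$ of half-edges at small vertices satisfies $L= n^{1/4+o(1)}$ w.h.p. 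By exchangeability of the uniform matching $\tau$ this gives $\PP[h\text{ is at a small vertex}]=\E[L]/6n = O(n^{-3/4+o(1)})$ for any fixed half-edge $h$.

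The heart of the argument is a first-moment estimate for the number $M$ of edges of $T_n$ whose two endpoints are \emph{distinct} vertices of $V_S$ (``non-loop small--small edges''). Writing $M$ as a sum over unordered pairs $\{h_1,h_2\}$ of half-edges, we have $\PP[\tau(h_1)=h_2]=1/(6n-1)$, and conditional on this the remaining $\tau$-values form a uniform matching on the other $6n-2$ half-edges. We explore the cycles of $\sigma\tau$ through $h_1$ and through $h_2$ one step at a time: each step reveals one fresh uniform value of $\tau$ and closes the current cycle with probability at most $1/(6n-O(n^{1/4}))$, so each cycle has length $<n^{1/4}$ with probability $O(n^{-3/4})$. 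On the event that the two cycles are distinct (i.e.\ $v(h_1)\neq v(h_2)$) the two explorations use disjoint uniform choices in the remaining matching, so the joint probability that both cycles are short is $O(n^{-3/2})$. Summing over all pairs,
\[
\E[M] \le \binom{6n}{2}\cdot\frac{1}{6n-1}\cdot O(n^{-3/2}) = O(n^{-1/2}) \longrightarrow 0,
\]
and by Markov's inequality $M=0$ with high probability.

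Finally, with high probability $T_n$ has at least two vertices (the one-vertex case having probability $O(1/n)$, as recalled in the introduction) and the $1$-skeleton of $T_n$ is connected, so every vertex of $T_n$ has at least one non-loop edge. On the event $\{M=0\}$, any non-loop edge incident to a vertex of $V_S$ must have its other endpoint in $V_L$; hence every $v\in V_S$ is adjacent in $T_n$ to some vertex of $V_L$, giving graph distance $\le 1\le 6$ to a vertex of degree $\ge n^{1/4}$, as desired. The delicate point in the proof is the ``approximate independence'' step of the second paragraph: one must carefully separate the contribution of the \emph{same-cycle} case (which corresponds to an actual loop of $T_n$ and is therefore excluded from the count $M$), and verify that on the complementary event the sequential exploration of the two cycles, after conditioning on $\tau(h_1)=h_2$, does use genuinely independent uniform random choices among the remaining half-edges, legitimately factorising the joint short-cycle probability as $O(n^{-3/4})\cdot O(n^{-3/4})$.
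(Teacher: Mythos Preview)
Your approach is quite different from the paper's and aims at a strictly stronger statement (distance $\le 1$ rather than $\le 6$), via a first-moment bound on the number $M$ of non-loop edges joining two small vertices. The paper instead runs a peeling exploration around a uniformly chosen vertex and shows that the probability of seeing three closure events in the first $3n^{1/4}$ steps is $o(n^{-1})$; this yields the distance-$6$ bound directly and stays entirely inside the peeling framework, where at each step the peeled edge is matched to a \emph{uniform} remaining boundary edge.

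The difficulty with your argument is precisely the ``delicate point'' you flag, and the resolution you propose is not correct. In the exploration of the $\sigma\tau$-cycle $C_1=(c_1,c_2,\dots)$ through $h_1$, the claim ``each step reveals one fresh uniform value of $\tau$'' fails: if two cycle elements $c_j$ and $c_{k+1}$ lie in the same triangle with $\sigma(c_j)=c_{k+1}$, then $c_j=\sigma^{-1}(c_{k+1})=\tau(c_k)$, so $\tau(c_j)=c_k$ is already determined by the reveal at step $k$ and no fresh uniform choice is made at step $j$. The same phenomenon occurs when you pass to the second cycle: an element of $C_2$ may lie in the same triangle as an element of $C_1$ (this is perfectly compatible with $C_1\neq C_2$), and then the corresponding $\tau$-value is already fixed by the exploration of $C_1$. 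So the two explorations do \emph{not} use disjoint independent uniform choices, and the factorisation $O(n^{-3/4})\cdot O(n^{-3/4})$ is not justified as written. A related minor point: inferring $\E[L]=O(n^{1/4+o(1)})$ from ``$L=n^{1/4+o(1)}$ w.h.p.'' requires a tail bound you have not supplied (the trivial bound $L\le 6n$ on the complementary event is not good enough).

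Your stronger conclusion is very plausibly true, and the argument can likely be repaired---for instance by working in the peeling framework of the paper (Propositions~\ref{combi_estimate_large_vertices}--\ref{combi_estimate_small_vertices} show exactly how to explore two vertices in sequence with honest conditional uniformity), or by carefully bounding the number of ``non-fresh'' steps in the $\sigma\tau$-exploration by the number of same-triangle coincidences. But as it stands, the independence step is a genuine gap rather than a routine verification.
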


With those estimates at hands, we can proceed to the proof of the upper bound of our main result.

\begin{proof}[Proof of the upper bound of Theorem \ref{thm_main}] Recall that $ \varepsilon>0$ is fixed and suppose $ \varepsilon \in (0, 1/8)$. To ease notation, we will write $f_{n} \preceq g_{n}$ if eventually $f_{n} \leq g_{n} + A$, where $A=A( \varepsilon) >0$ whose value may increase from line to line but only depends on $ \varepsilon>0$ (in particular $A$ is not random). Our goal is to prove that on an event $ \mathcal{E}_{n}$ such that $  \mathbb{P}(\mathcal{E}_n) \to 1$, we have $ \mathrm{diam}(S_{n}^{C}) \preceq 2 (1 + \varepsilon) \log n$. 
Our event $ \mathcal{E}_{n}$ will be the intersection of the event on which the geometric conclusions of Section \ref{sec:geo} hold true, together with the events on which the conclusions of the above Propositions \ref{combi_estimate_large_vertices}, \ref{combi_estimate_small_vertices} and \ref{combi_estimate_tiny_vertices} hold true. From now on, we argue on this event and the rest of the reasoning is deterministic.\medskip

Let $x_{1},x_{2} \in S_{n}^{C}$. By the first item of the list in Section \ref{sec:geo}, up to loosing an additive constant, one can suppose that $x_{1}$ and $x_{2}$ are in some blue cusp neighborhoods, say $N_{i}^{(2)}$ and $N_{j}^{(2)}$, whose associated cusps have degrees $d_{i}, d_{j} \geq 1$. If $i=j$ then $ d_{ \mathrm{hyp}}(x_{1},x_{2}) \preceq 2 (1+ \varepsilon) \log n$ by our geometric considerations (the neighbourhood $N_i^{(2)}$ is close to a ball of radius $\log d_i \leq \log n$). We thus focus on the case $i \ne j$ and suppose $i=1$ and $j=2$ to fix notation. We will bound the distance $d_{\hyp}(x_1,x_2)$ in three different cases according to the values of the degrees $d_1$ and $d_2$. Each of these cases corresponds to one of the Propositions \ref{combi_estimate_large_vertices}, \ref{combi_estimate_small_vertices} and \ref{combi_estimate_tiny_vertices}.

\noindent\textbf{Case 1:} $d_1 d_2 \geq n^{1+\eps} $.
In this case, we use Proposition \ref{combi_estimate_large_vertices} and link $x_{1}$ to $x_{2}$ as follows. Let $f', c'_1, c'_2$ be the face and the two corners given by Proposition \ref{combi_estimate_large_vertices} for the corners $c_1$ and $c_2$ associated with $x_{1}$ and $x_{2}$. Now let $x_1'$ and $x_2'$ be two points respectively in the corners $c'_1$ and $c'_2$ of $f'$, such that $d_{\hyp} (x_1,h_1^{(2)}) = d_{\hyp} (x_1',h_1^{(2)})$ and $d_{\hyp} (x_2,h_2^{(2)}) = d_{\hyp} (x_2',h_2^{(2)})$. Then Eq.~\eqref{eq:ub} tells us that
\[ d_{\hyp}(x_1,x_1') \preceq 2(1+\varepsilon) \; \Big( \log(d_{v_1}(c_1,c'_1)) - d_{\hyp} (x_1,h_1^{(2)})  \Big)\]
and
\[ d_{\hyp}(x_2,x_2') \preceq 2(1+\varepsilon) \; \Big( \log(d_{v_2}(c_2,c'_2)) - d_{\hyp} (x_2,h_2^{(2)})  \Big), \]
and Eq.~\eqref{eq:ub2} gives
\[d_{\hyp}(x'_1,x'_2) \preceq (1+\eps) \left( d_{\hyp} (x_1,h_1^{(2)})+d_{\hyp} (x_2,h_2^{(2)}) \right).\]
Adding these up and using the bounds given by Proposition \ref{combi_estimate_large_vertices}, we obtain
\begin{align*}
d_{\hyp} (x_1,x_2) & \leq d_{\hyp}(x_1,x'_1)+d_{\hyp}(x'_1,x'_2)+d_{\hyp}(x'_2,x_2)\\
& \preceq 2(1+\varepsilon) \left( \log d_{v_1}(c_1,c'_1) + \log d_{v_2}(c_2,c'_2) \right) - (1+\varepsilon) \left( d_{\hyp} (x_1,h_1^{(2)}) + d_{\hyp} (x_2,h_2^{(2)}) \right)\\
& \preceq  2(1+2\varepsilon) \log(n).
\end{align*}

\noindent\textbf{Case 2:} $d_1 d_2 \leq n^{1+\eps} $ and $d_1,d_2 \geq n^{2\varepsilon}$. Now we will use Proposition \ref{combi_estimate_small_vertices}. In the first case (if we have a face $f''$ incident to both $v_1$ and $v_2$), by the geometric considerations gathered in Section \ref{sec:geo} we get
\begin{align*}
d_{\hyp}(x_1,x_2) & \leq d_{\hyp}(x_1,v_1)+d_{\hyp}(v_1,v_2)+d_{\hyp}(v_2,x_2)\\
& \preceq 2 (1+\eps) \left( \log d_1 + \log d_2 \right)\\
& \preceq 2(1+\eps)^2 \log n.
\end{align*}
In the second case of Proposition \ref{combi_estimate_small_vertices}, let $v'$, $f'_1$ and $f'_2$ be the vertex and faces given by Proposition \ref{combi_estimate_small_vertices}. The vertex $v'$ is incident to the corner $c'_1$ of $f'_1$ and to the corner $c'_2$ of $f_2$. Let $x'_1 \in f'_1 \cap \partial N_{v'}^{(2)}$ and $x'_2 \in f'_2 \cap \partial N_{v'}^{(2)}$. By Eq.~\eqref{eq:ub} and \eqref{eq:ub2}, we have
\[d_{\hyp}(x_1,x_1') \preceq 2(1+\varepsilon) \log(d_1) \quad \mbox{ and } \quad d_{\hyp}(x_2,x_2') \preceq 2(1+\varepsilon) \log(d_2).\]
Moreover, by \eqref{eq:ub} and the bound given by Proposition \ref{combi_estimate_small_vertices}, we also have
\[d_{\hyp}(x_1',x_2') \preceq 2(1+\varepsilon) \log d_{v'}(c'_1,c'_2)  \preceq 2(1+\varepsilon) \log\left( \frac{n^{1+2\varepsilon}}{d_1 d_2} \right) .\]
Adding everything up, we obtain as desired
\[d_{\hyp}(x_1,x_2)  \preceq 2(1+\eps)(1+2\varepsilon) \; \log(n). \]

\noindent\textbf{Case 3:} $d_1\leq n^{2\varepsilon}$ or $d_2 \leq n^{2\varepsilon}$. Assume $d_1 \leq n^{2\varepsilon}$. Let $v'_1$ be the closest vertex from $v_1$ (for the graph distance in $T_n$) with degree at least $n^{2\eps}$. Since $2\eps<1/4$, by Proposition \ref{combi_estimate_tiny_vertices}, there is a path with graph length at most $6$ from $v_1$ to $v'_1$ using only vertices with degrees at most $n^{2\eps}$ (except of course $v'_1$). But by Eq. \eqref{eq:ub2}, the hyperbolic distance $d_{\hyp}$ between two neighbour vertices of $T_n$ of degree $\leq n^{2\varepsilon}$ is at most
\[2 \varepsilon(1+\varepsilon) \log(n) + 4R.\]
Therefore, up to paying roughly $2 \times 6 \times 2 \varepsilon(1+ \varepsilon) \log n$, we can replace $x_1$ by a point $x_{1}'$ in a cusp neighbourhood of degree larger than $n^{2\eps}$. The same is true for $x_2$ if $d_2 \leq n^{2\eps}$, so we are back to case $1$ or $2$. By finally letting $\eps \to 0$, this concludes the proof of the theorem.
\end{proof}

\section{Proof of the combinatorial estimates}\label{sec:proof_combi}
Our goal is now to prove Propositions \ref{combi_estimate_large_vertices}, \ref{combi_estimate_small_vertices} and \ref{combi_estimate_tiny_vertices}. These results only deal with the random triangulation $T_{n}$ which is built by gluing $2n$ triangles in a uniform fashion. Our main tools will be exploration methods of such maps as in \cite{BCP19a}. Those estimates are interesting on their own, since they sharpen our understanding of the geometry of the graph structure of $T_{n}$ and give further support for \cite[Conjecture 1]{BCP19a}. 

\subsection{Peeling explorations of random triangulations}\label{subsec_defn_peeling}
We recall some background from \cite{BCP19a}, which treats a more general setting. We fix $n \geq 1$, and a pairing $\omega$ of the edges of a collection of $2n$ triangles yielding a triangulation $t$. We do not assume yet that $\omega$ is random. We will construct step by step the triangulation $t$ obtained by gluing the edges of the $2n$ triangles two by two according to $\omega$.

More precisely, we will create a sequence $$ S_0 \to S_{1} \to  \dots \to S_{3n} = t$$ of ``combinatorial surfaces''  where $S_{0}$ is simply made of $2n$ disjoint triangles, and where we move on from $ S_{i} $ to $S_{i+1}$ by identifying two edges of the pairing $\omega$. More specifically, $S_{i}$ will be a union of labeled maps with distinguished faces called the holes (they are in light green on Figure \ref{fig_state_exploration}). The holes are made of the edges which are not yet paired. The set of these edges will be called the boundary of the surface and be denoted by $ \partial S_{i}$. Clearly, we have $|\partial S_{0}|=6n$, so for every $1 \leq i \leq 3n$, we have
$$ | \partial S_{i}| =  6n-2i.$$

\begin{figure}[!h]
 \begin{center}
\begin{overpic}[width=15cm]{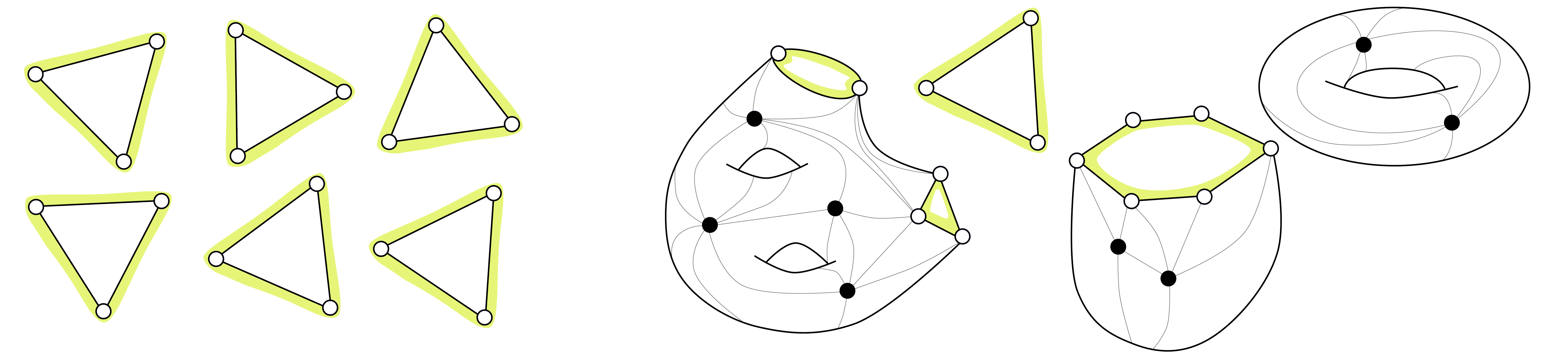}
\put (34,12) {etc.}
\put (98,12) {etc.}
\end{overpic}
 \caption{Starting configuration (on the left) and a typical state of the exploration (on the right). Here and later the labeling of the oriented edges does not appear for the sake of visibility. The final vertices of the triangulation are black dots whereas ``temporary'' vertices are in white. Notice on the right side that $S_{i}$ contains a closed surface without boundary: if this happens, the final surface $S_{n}$ is disconnected.}\label{fig_state_exploration}
 \end{center}
 \end{figure}

To go from $S_{i}$ to $S_{i+1}$, we select an edge on $ \partial S_{i}$ which we call \emph{the edge to peel} and identify it with its partner edge in $\omega$, also belonging to $\partial S_{i}$. A detailed description of each of the cases that may arise when going from $S_i$ to $S_{i+1}$ can be found in \cite[Section 3.1]{BCP19a}. In particular, we call "true vertices" the vertices of $S_i$ that are not on its boundary, and therefore truly correspond to a vertex of $t$, and we call "temporary vertices" the vertices of $S_i$ lying on $\partial S_i$. We recall from \cite{BCP19a} that the only cases where a new true vertex $v$ is created between $S_i$ and $S_{i+1}$ are:
\begin{itemize}
\item[$\bullet$]
if the peeled edge is glued to one of its two neighbours along the same hole (if furthermore the hole has perimeter $2$, then $2$ true vertices are created);
\item[$\bullet$]
if the peeled edge belongs to a hole of perimeter $1$ and is glued to another hole of perimeter $1$.
\end{itemize}
When this occurs, we will also say that the vertex $v$ is \emph{closed} at time $i$.

We now move on to our random setting and apply the above discussion to the case where the gluing $\omega$ is uniform, i.e. $t=T_n$ is a uniform triangulation with $2n$ faces. On top of $\omega$, the sequence $S_{0} \to S_{1} \to \dots \to S_{3n}$ depends on an \emph{algorithm} called the peeling algorithm, which is simply a way to pick the next edge to peel $ \mathcal{A}(S_{i}) \in \partial S_{i}$. Highlighting the dependence in $ \mathcal{A}$, we can thus form the random exploration sequence 
$S_{0}^{ \mathcal{A}} \to S_{1}^{ \mathcal{A}} \to \dots \to S_{3n}^{ \mathcal{A}} =  T_n$ by starting with $S_{0}^{ \mathcal{A}}$, the initial configuration made of the labeled triangles. To go from $ S_{i}^{ \mathcal{A}}$ to $ {S}_{i+1} ^{ \mathcal{A}}$, we perform the identification of the edge $ \mathcal{A}(S_{i})$ together with its partner in the pairing $ \omega$. We recall from \cite[Prop 10]{BCP19a} that when $\omega$ is uniform, then at each step $i$, conditionally on $ S_{i}^{ \mathcal{A}}$ and on $ \mathcal{A}(S_{i}^{ \mathcal{A}})$, the edge $\mathcal{A}(S_{i}^{ \mathcal{A}})$ is glued to a uniformly chosen edge in $\partial S_{i}^{ \mathcal{A}} \backslash \{\mathcal{A}(S_{i}^{ \mathcal{A}})\}$.

The strength of this setup is that, as for planar maps \cite{curienpeeling}, we can use \emph{different} algorithms $ \mathcal{A}$ to explore the \emph{same} random triangulation $T_n$ and then to get \emph{different} types of information. We will see this motto in practice in the following sections. When exploring our random triangulation with a given peeling algorithm, we will always write $( \mathcal{F}_{i})_{0 \leq i \leq 3n}$ for the canonical filtration generated by the exploration.

\subsection{Proof of Proposition \ref{combi_estimate_large_vertices}}

\begin{proof}[Proof of Proposition \ref{combi_estimate_large_vertices}]
Conditionally on $T_n$, let $c_1$ and $c_2$ be two uniform corners of $T_n$, and let $f_1, f_2$ and $v_1, v_2$ be the incident faces and vertices. Since there are at most $(2n \times 3)^2$ possible choices of $(c_1,c_2)$, it is enough to prove
\[ \PP \left( \deg(v_1) \deg(v_2) \geq n^{1+\eps} \mbox{ and the face $f'$ does not exist}\right)=o \left( \frac{1}{n^2} \right).\]
Since there are at most $(6n)^2$ possible values of the pair $\left(\deg(v_1), \deg(v_2) \right)$, it is enough to prove that, for any $d_1, d_2$ with $d_1 d_2 \geq n^{1+\eps}$, if we write $\beta_i=\frac{\log d_i}{\log n}$ for $i \in \{1,2\}$, we have
\begin{equation}\label{eqn_find_fprime}
\PP \left(\begin{array}{c} \deg(v_1)=d_1, \deg(v_2)=d_2 \mbox{ and there is no $f'$} \\ \mbox{ such that $d_{v_i}(f_i,f') \leq 3n^{\beta_i/(\beta_1+\beta_2-\eps/2)}$ for $i \in \{1,2\}$} \end{array} \right) =o \left( \frac{1}{n^4} \right).
\end{equation}

The proof of \eqref{eqn_find_fprime} relies on a peeling exploration as the ones defined in Section \ref{subsec_defn_peeling}. For $i \in \{1,2\}$, let $\ell_i=n^{\beta_i/(\beta_1+\beta_2-\eps/2)}$. Note that since $d_1 d_2 \geq n^{1+\eps}$, we have $\beta_1+\beta_2 \geq 1+\eps$, so $\ell_i \leq d_i^{\frac{1}{1+\eps/2}}<\frac{d_i}{4}$ if $n$ (and therefore $d_i$) is large enough. In particular, this implies $\ell_1+\ell_2<\frac{n}{2}$. On the other hand, we have $\beta_i \geq \eps$, which implies 
\begin{equation}\label{eqn_basic_estimate_l}
n^{\eps/2} \leq \ell_i \leq \frac{d_i}{4}.
\end{equation}

Note that we can sample $(T_n,f_1,f_2,v_1,v_2)$ as follows. We start from a collection $S_0$ of $2n$ triangles, we pick two triangles $f_1$ and $f_2$ uniformly among them and pick two corners $c_1$ and $c_2$ incident respectively to $f_1$ and $f_2$. We then run a  peeling exploration which keeps track of the faces $f_1,f_2$ and of the corners $c_{1}, c_{2}$. By a slight abuse of notation, we will also call $v_1$ (resp. $v_2$) the vertices  of $S_i$ which are "inherited" from the initial corners $c_1,c_2$. Notice that $v_1,v_2$ stay incident to $c_{1}$ and $c_{2}$ along the exploration.

The peeling algorithm we will use is the following:

\fbox{ \begin{minipage}{14cm}
\begin{itemize}
\item[$\bullet$]
For $0 < i \leq \ell_1$, the peeled edge is an edge incident to $v_1$. The exploration is stopped if the vertex $v_1$ is closed or if the peeled edge is glued to the face $f_2$.
\item[$\bullet$]
For $\ell_1 < i \leq \ell_1+\ell_2$, the peeled edge is an edge incident to $v_2$. The exploration is stopped if the vertex $v_2$ is closed or if the peeled edge is glued to the connected component containing $v_1$.
\end{itemize}
\end{minipage}}
\medskip

Note that the algorithm depends on $d_1$ and $d_2$, and that it makes sense because $\ell_1+\ell_2<3n$, so it will indeed be stopped before everything is explored. We call the exploration \emph{successful} if it is stopped before time $\ell_1+\ell_2$, either by the closure of $v_1$ or $v_2$, or by merging the connected components of $v_1$ and $v_2$. We will show that the probability for the exploration to be successful is $1-o \left( \frac{1}{n^4} \right)$, and that if $n$ is large enough, the success of the exploration ensures that the event of \eqref{eqn_find_fprime} does not occur.

Let us start with a proof that the success of the exploration prevents the event in \eqref{eqn_find_fprime} from happening. First, if the exploration is stopped by the closure of $v_1$, then $v_1$ is incident to less than $\ell_1$ faces, so its degree can be crudely bounded by $3 \ell_1$, which implies $\deg(v_1)<d_1$ if $n$ is large enough. Similarly, if the exploration is stopped by the closure of $v_2$, then $\deg(v_2)<d_2$. Moreover, by construction, any face $f$ lying in the connected component of $v_1$ at time $i \leq \ell_1$ is incident to $v_1$. We claim that $f$ has a corner $c$ with $d_{v_1}(c_1,c) \leq 3\ell_1$. Indeed, the neighbourhood of $v_1$ in the explored part at time $i$ is always a gluing of corners belonging to faces already explored. It is possible that several corners of the same face appear, but the number of corners is bounded by $3i \leq 3\ell_1$, so it is always possible to go from $c_1$ to a corner of $f$ in the neighbourhood of $v_1$ by crossing at most $3\ell_1$ corners.

Therefore, if the exploration is stopped before time $\ell_1$ because $f_2$ is glued to the peeled edge, then $f_2$ is incident to $v_1$ and has a corner $c_2$ with $d_{v_1}(c_1,c_2) \leq 3\ell_1$, so we can take $f'=f_2$. If the exploration is stopped between times $\ell_1$ and $\ell_2$ because the two components of $v_1$ and $v_2$ are glued, let $f'$ be the face incident to $v_1$ which is glued to the peeled edge at the last peeling step. Since the peeled edge is incident to $v_2$, the face $f'$ is incident to both $v_1$ and $v_2$, at two corners $c'_1$ and $c'_2$. Moreover, since $f'$ is in the component of $v_1$ at time $\ell_1$, we have $d_{v_1}(c_1,c'_1) \leq 3 \ell_1$. Finally, by the same reasoning around $v_2$, we also have $d_{v_2}(c_2,c'_2) \leq 3 \ell_2$, so $f'$ satisfies the desired properties and the event in \eqref{eqn_find_fprime} does not occur.

It remains to estimate the probability of non-success of the exploration. The basic idea is the following: we first show that at time $\ell_1$, the number of boundary edges in the component of $v_1$ is of order $\ell_1$. Therefore, at each step $\ell_1 < i < \ell_1+\ell_2$, the probability to finish the exploration by gluing the two components is of order $\frac{\ell_1}{n}$, which will be enough to conclude since $\ell_1 \ell_2$ is much larger than $n$.

More precisely, we recall that for every $i$, we denote by $\kF_i$ the $\sigma$-algebra generated by the first $i$ peeling steps. We also denote by $P_i$ the number of boundary edges of the component of $v_1$ in $S_i$ (these edges may lie on several different holes). Note that $P_0=3$ and that $P_i \leq i+2$ for every $i$. For $i<\ell_1$, the number $P_{i+1}$ is equal to $P_i-2$ if the peeled edge is glued to another boundary edge of the component of $v_1$, and to $P_i+1$ if this is not the case. Therefore, we have
\begin{align*}
\E \left[ P_{i+1}-P_i | \kF_i \right] &= -2 \frac{P_i-1}{6n-2i-1} + \frac{6n-2i-P_i}{6n-2i-1}\\ &=1-3\frac{P_i-1}{6n-2i-1}\\
& \geq 1-3\frac{i+1}{6n-2i-1}\\
& \geq 1-3\frac{\ell_1+1}{6n-2\ell_1-1}\\
& > \frac{1}{2}
\end{align*}
by using in the end the fact that $\ell_1 < \frac{n}{2}$. Since the increments $|P_{i+1}-P_i|$ are bounded by $2$, by the Azuma inequality, we obtain
\[ \PP \left( \mbox{the exploration does not stop before $\ell_1$ and } P_{\ell_1} \leq \frac{\ell_1}{4} \right) \leq \exp \left(-\frac{\ell_1}{128} \right) \leq \exp \left(-\frac{n^{\eps/2}}{128} \right) = o \left( n^{-4} \right),\]
where we used \eqref{eqn_basic_estimate_l} in the end. Therefore, we may assume that if the exploration has not stopped at time $\ell_1$, then $P_{\ell_1} \geq \ell_1/4$. But if this is the case, then for any $\ell_1 < i \leq \ell_1+\ell_2$, we have
\[ \PP \left( \mbox{the peeled edge at time $i$ is glued to the component of $v_1$} | \kF_i \right) = \frac{P_{\ell_1}}{6n-2i-1} \geq \frac{\ell_1}{24n}.\]
If this last event occurs for some $i$, the exploration is stopped and is succesful, so we finally have
\begin{align*}
\PP \left( \mbox{the exploration is not successful} \right) &\leq o \left(n^{-4} \right) + \left( 1-\frac{\ell_1}{24n} \right)^{\ell_2}\\ &\leq o(n^{-4})+\exp \left( -\frac{\ell_1 \ell_2}{24n}\right)\\
&\leq o(n^{-4})+\exp \left( -n^{\eps/3} \right)\\
&=o(n^{-4})
\end{align*}
by using the definition of $\ell_1$ and $\ell_2$.
\end{proof}
\subsection{Proof of Proposition \ref{combi_estimate_small_vertices}}

\begin{proof}[Proof of Proposition \ref{combi_estimate_small_vertices}]
Let $f_1, f_2$ be two uniform independent faces, and let $v_1, v_2$ be the vertices incident to uniformly chosen corners of $f_1$ and $f_2$. Let also $d_1, d_2 \geq n^{2\eps}$ be such that $d_1d_2 \leq n^{1+\eps}$. For the same reasons as in the proof of Proposition \ref{combi_estimate_large_vertices}, it is enough to prove that
\begin{equation}\label{eqn_find_vprime}
\PP \left( \deg(v_1)=d_1, \deg(v_2)=d_2 \mbox{ and neither $f''$ nor $(v',f'_1,f'_2)$ exists} \right)=o \left( \frac{1}{n^4} \right).
\end{equation}
To prove this, we will rely on a peeling algorithm similar to the one used to prove Proposition \ref{combi_estimate_large_vertices}. As previously, we will pick $f_1, f_2, v_1, v_2$ in $S_0$ and follow them along the exploration. 
However, since the two vertices have too small degrees, we will need to find a third vertex $v'$ "inbetween" them, so the algorithm will be more complicated. Basically, we first explore the neighbourhood of $v_1$ until it becomes a true vertex, then the neighbourhood of $v_2$ until it becomes a true vertex, and finally we explore all the neighbours of $v_1$ until one of them is glued to a neighbour of $v_2$. To describe precisely the last phase of the exploration, we will assign colours to some of the vertices: the neighbour of $v_1$ that we are currently exploring will be red, the neighbours of $v_1$ that we can still explore later will be blue, and the neighbours of $v_1$ that we are not allowed to explore anymore will be black. We denote by $\tau_1$ (resp. $\tau_2$) the closure time of $v_1$ (resp. $v_2$). Here is a complete description of the peeling algorithm, which is divided into three phases (see also Figure \ref{fig:algo_prop_5}):

\fbox{ \begin{minipage}{14cm}
\begin{itemize}
\item
Phase 1: exploration of the neighbourhood of $v_1$:
\begin{itemize}
\item
For $0<i \leq \tau_1$, the peeled edge is a boundary edge incident to $v_1$;
\item
if $\tau_1<\frac{d_1}{4}$, the exploration is stopped at time $\tau_1$;
\item
if $\tau_1>d_1$, the exploration is stopped at time $d_1$;
\item
for $i \leq \tau_1$, if the peeling step $i$ glues together the connected components of $v_1$ and $v_2$, then the exploration is stopped at time $i$.
\end{itemize}
\item
Phase 2: exploration of the neighbourhood of $v_2$:
\begin{itemize}
\item
for $\tau_1 < i \leq \tau_2$, the peeled edge is a boundary edge incident to $v_2$;
\item
if $\tau_2-\tau_1<\frac{d_2}{4}$, the exploration is stopped at time $\tau_2$;
\item
if $\tau_2-\tau_1>d_2$, the exploration is stopped at time $\tau_1+d_2$;
\item
for $\tau_1< i \leq \tau_2$, if the peeling step $i$ glues together the connected components of $v_1$ and $v_2$, then the exploration is stopped at time $i$;
\end{itemize}
\item
Phase 3: trying to link $v_1$ to $v_2$:
\begin{itemize}
\item
at time $\tau_2$, we colour in red one of the vertices on the boundary of the connected component of $v_1$, and all the others in blue;
\item
for $i > \tau_2$, we peel the boundary edge on the left of the red vertex;
\item
for $i > \tau_2$, if the red vertex has been red for at least $\frac{n^{1+\eps}}{d_1 d_2}$ steps, we colour it in black, and choose a blue vertex that we colour in red;
\item
for $i > \tau_2$, if a blue or red vertex is glued to another blue or red vertex or to the peeled edge at time $i$, we colour it in black;
\item
for $i > \tau_2$, if the peeling step $i$ glues together the connected components of $v_1$ and $v_2$, then the exploration is stopped at time $i$;
\item
if there is no more blue or red vertex, the exploration is stopped and declared unsuccessful.
\end{itemize}
\end{itemize}
\end{minipage}}
\medskip

\begin{figure}
\includegraphics[scale=1]{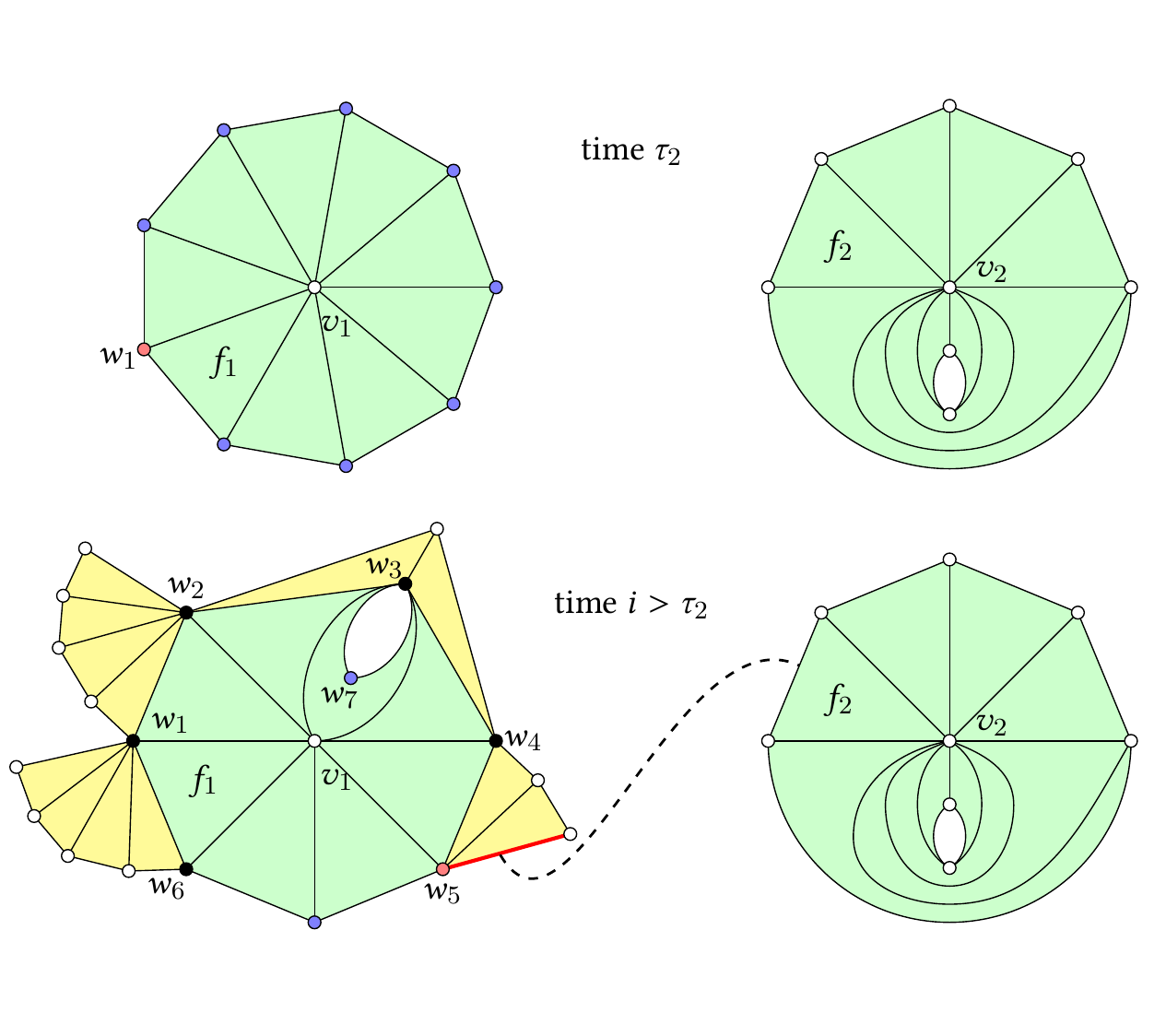}
\caption{The peeling algorithm used to prove Proposition \ref{combi_estimate_small_vertices}. On top, the connected components containing $v_1$ and $v_2$ at time $\tau_2$. The first red vertex is $w_1$. On the bottom, the components at some time $i>\tau_2$. The black vertices are those whose neighbourhood has been explored during too much time ($w_1$, $w_2$), or those which have been affected by the exploration before becoming red ($w_3$, $w_4$, $w_6$). Note that $w_7$ is still blue. The current red vertex is $w_5$. If the peeled edge (in red) is glued to the component of $v_2$, the exploration will be stopped succesfully, with $w_5$ playing the role of the vertex $v'$ of Proposition \ref{combi_estimate_small_vertices}.}\label{fig:algo_prop_5}
\end{figure}

When the exploration is stopped, in all the cases except the last one, it is declared successful. As in the proof of Proposition \ref{combi_estimate_large_vertices}, we will first prove that if the exploration is successful, then the event of \eqref{eqn_find_vprime} does not occur, and then that the probability of success is $1-o(n^{-4})$.

First, just like in the proof of Proposition \ref{combi_estimate_large_vertices}, if the exploration is stopped because we do not have $\frac{d_1}{4} \leq \tau_1 \leq d_1$, then $\deg (v_1) \ne d_1$ so the event of \eqref{eqn_find_vprime} does not occur. Similarly, if we do not have $\frac{d_2}{4} \leq \tau_2-\tau_1 \leq d_2$, then $\deg(v_2) \ne d_2$. Second, if the exploration is stopped at some time $i \leq \tau_2$ because the components of $v_1$ and $v_2$ are glued together, then either an edge incident to $v_1$ is glued to a face incident to $v_2$, or the vertex $v_2$ is glued to a neighbour of $v_1$. In both cases, the vertices $v_1$ and $v_2$ are neighbours in $S_i$, so they are neighbours in $T_n$. Hence, we can take as $f''$ a face incident to an edge between $v_1$ and $v_2$.

Therefore, the only case left to treat is the one where the components of $v_1$ and $v_2$ are glued together at time $i>\tau_2$. In this case, let $v'$ be the red vertex in $S_{i-1}$. Since $v'$ has been blue at some point, there is a face $f'_1$ in $S_{i-1}$ incident to both $v'$ and $v_1$. Moreover, let $f'_2$ be the face of the component of $v_2$ in $S_{i-1}$ to which the peeled edge at step $i$ has been glued. Then $f'_2$ is incident to both $v'$ (since $v'$ is on the peeled edge at step $i$) and to $v_2$ (the component of $v_2$ in $S_{i-1}$ contains only faces incident to $v_2$) in $S_i$, so it is also true in $T_n$. Therefore, we only need to make sure that $d_{v'}(c'_1,c'_2) \leq \frac{n^{1+2\eps}}{d_1 d_2}$ in $S_i$, where $c'_1$ and $c'_2$ are two corners of $v'$ in $S_i$ incident to $f'_1$ and $f'_2$. Let $i'$ be the step at which $v'$ has become red for the first time. Then $v'$ was blue in $S_{i'}$, so there are only two faces incident to $v'$ in $S_{i'}$ (the two faces incident to the edge from $v'$ to $v_1$). Moreover, by the definition of our algorithm we have $i-i' \leq \frac{n^{1+\eps}}{d_1 d_2}$, i.e. $v'$ may only remain red during at most $\frac{n^{1+\eps}}{d_1 d_2}$ steps. Since each step between $i'$ and $i$ adds at most one face incident to $v'$, there are at most $\frac{n^{1+\eps}}{d_1 d_2}+2$ faces of $S_i$ incident to $v'$, so at most
\[3 \left( \frac{n^{1+\eps}}{d_1 d_2}+2 \right) \leq  \frac{n^{1+2\eps}}{d_1 d_2} \]
corners incident to $v'$. Therefore, we have $d_{v'}(c'_1, c'_2)  \leq \frac{n^{1+2\eps}}{d_1 d_2}$ in $S_i$, and this is also true in $T_n$.

We now prove that the probability for the exploration to be unsuccessful is $o(n^{-4})$. Roughly speaking, we want to prove that there will be many possible successive red vertices during phase 3, and that each of them has a reasonable chance to stop the exploration successfully. Therefore, we will need to bound from below the number of blue vertices in $S_{\tau_2}$.

We first estimate the number of steps needed to know if the exploration is successful. The number of blue vertices in $S_{\tau_2}$ is bounded by the boundary length of the component of $v_1$ in $S_{\tau_2}$, which is at most $d_1$. Moreover, during phase 3 of the exploration, the number of blue vertices never increases, and it decreases at least every $\frac{n^{1+\eps}}{d_1 d_2}$ steps. Hence, the total duration of the exploration is bounded by
\[ d_1+d_2+d_1 \times \frac{n^{1+\eps}}{d_1 d_2} \leq 3n^{1-\eps}\]
since $d_1 \leq \frac{n^{1+\eps}}{d_2} \leq \frac{n^{1+\eps}}{n^{2\eps}}=n^{1-\eps}$. In particular, for every step $i$ of the exploration, the number of boundary edges of $S_i$ which do not belong to the components of $v_1$ and $v_2$ is larger than $6n-3n^{1-\eps}$, which is $\geq 5n$ if $n$ is large enough.

Let $B_i$ be the number of boundary vertices of the connected component of $v_1$ in $S_i$. The last discussion implies that for every $i \leq \tau_1$, we have
\[ \PP \left( B_{i+1}=B_i+1 | \mathcal{F}_i \right) \geq \frac{5}{6}. \]
By the same argument based on the Azuma inequality as in the proof of Proposition \ref{combi_estimate_large_vertices}, we have 
\[ \PP \left( \tau_1 \geq \frac{d_1}{4} \mbox{ and } B_{\tau_1}<\frac{\tau_1}{4} \right) =o(n^{-4}).\]
Therefore, we may assume $B_{\tau_1} \geq \frac{\tau_1}{4} \geq \frac{d_1}{16}$. If this occurs and the exploration does not end succesfully before $\tau_2$, then we have at least $\frac{d_1}{16}$ blue vertices in $S_{\tau_2}$. Similarly, we may assume that the number of boundary edges of the component of $v_2$ at time $\tau_2$ is at least $\frac{d_2}{16}$.

We now estimate the total number of blue vertices that become black without being red before the end of the exploration (because of the fourth item of Phase 3 in the definition of the peeling algorithm). A blue vertex $v \in S_i$ may be turned black in $S_{i+1}$ for three different reasons:
\begin{enumerate}
\item\label{black_reason_i}
the peeled edge is glued at time $i$ to one of the two boundary edges incident to $v$;
\item\label{black_reason_ii}
the peeled edge is glued at time $i-1$ to the edge at distance $1$ on the right of $v$ along the boundary, so that $v$ is the second end of the peeled edge at time $i$;
\item\label{black_reason_iii}
the red vertex changes at time $i$, and the new red vertex is the one on the right of $v$, so $v$ is the other end of the peeled edge.
\end{enumerate}
At each step, at most $2$ vertices may be turned black for reason \ref{black_reason_i} and $1$ for reason \ref{black_reason_ii}.
Hence, the number of vertices that are turned black for reasons \ref{black_reason_i} and \ref{black_reason_ii} is at most three times the number of times after $\tau_2$ when the peeled edge is glued to an edge at distance at most $2$ from a blue vertex along the boundary.

For every $i$, the probability for this to occur at step $i$ conditionally on $\kF_i$ is at most $5 \times \frac{d_1}{5n}$ (since the number of blue vertices is bounded by $d_1$).
Since the total number of peeling steps is at most $n^{1-\eps}$, the expected number of times where this occurs is bounded by $\frac{d_1}{n^{\eps}}$. By using the Azuma inequality as before, we can also show that the probability that this occurs more than $2\frac{d_1}{n^{\eps}}$ times is $o(n^{-4})$, so
\[ \PP \left( 6\frac{d_1}{n^{\eps}} \mbox{ blue vertices become black for reasons \ref{black_reason_i} and \ref{black_reason_ii}} \right) =o (n^{-4}).\]
Since there are at least $\frac{d_1}{16}$ blue vertices at time $\tau_1$, with probability $1-o(n^{-4})$, at least $\frac{d_1}{17}$ of them either are coloured red at some point, or remain blue until the end, or are turned black for reason \ref{black_reason_iii}. Moreover, at most half of these vertices can be turned black for reason \ref{black_reason_iii} because we can only turn one vertex black in this way everytime there is a new red vertex. Therefore, at least $\frac{d_1}{50}$ vertices will either be red at some point, or remain blue until the end of the exploration.

Finally, recall that with probability $1-o(n^{-4})$, the total boundary length of the component of $v_2$ is larger than $\frac{d_2}{16}$. If this event occurs, then at each peeling step $i>\tau_2$, the conditional probability (on $\kF_i$) to complete the exploration in a succesful way by gluing the components of $v_1$ and $v_2$ is at least $\frac{1}{6n} \times \frac{d_2}{16}>\frac{d_2}{100n}$.
Moreover, if the exploration fails, we know that with probability $1-o(n^{-4})$, at least $\frac{d_1}{50}$ red vertices have been "investigated", each one during $\frac{n^{1+\eps}}{d_1 d_2}$ steps, so there have been at least $\frac{d_1}{50} \times \frac{n^{1+\eps}}{d_1 d_2}$ "failed" steps after $\tau_2$. Therefore, we have
\[ \PP \left( \mbox{the exploration is not successful} \right) \leq o(n^{-4}) + \left( 1-\frac{d_2}{100n} \right)^{n^{1+\eps}/(50d_2)} \leq \exp \left( -\frac{n^{\eps}}{5000} \right)+o(n^{-4}),\]
which concludes the proof.
\end{proof}

\subsection{Proof of Proposition \ref{combi_estimate_tiny_vertices}}

\begin{proof}[Proof of Proposition \ref{combi_estimate_tiny_vertices}]
As in the proofs of Propositions \ref{combi_estimate_large_vertices} and \ref{combi_estimate_small_vertices}, let $v$ be the vertex incident to a uniform corner of a uniform face of $T_n$. We will prove that, with probability $1-o(n^{-1})$, either $T_n$ is disconnected, or there is a vertex $v'$ with degree at least $n^{1/4}$ at graph distance at most $6$ from $v$. Since we know that $T_n$ is connected with probability $1-o(1)$, this is enough to guarantee\footnote{Alternatively, we could also be more precise in what follows to show that the probability to disconnect $v$ is $O(n^{-2})$, but this would make the proof longer.} the conclusion of Proposition \ref{combi_estimate_tiny_vertices} with probability $1-o(1)$.

Like the algorithm used in the proof of Proposition \ref{combi_estimate_small_vertices}, the peeling algorithm we will use to prove this depends on a vertex coloured in red on the boundary. This red vertex is roughly the candidate for $v'$ that we are currently testing. Here is the definition of the algorithm:

\fbox{ \begin{minipage}{14cm}
\begin{itemize}
\item
the red vertex in $S_0$ is $v$;
\item
if the red vertex of $S_{i-1}$ is closed at step $i$, we choose a new red vertex on $\partial S_i$ at minimal graph distance from $v$;
\item
at each step, the peeled edge is the edge on the left of the red vertex along the boundary;
\item
if all the connected component containing $v$ is closed (so that there is no possible choice for the new red vertex), the exploration is stopped and declared successful;
\item
if there have been $n^{1/4}$ consecutive peeling steps without a closure time, the exploration is stopped and declared successful;
\item
if $3$ closure times have occured, the exploration is stopped and declared unsuccessful.
\end{itemize}
\end{minipage}}
\medskip

We first note that if the exploration is successfully stopped because the connected component of $v$ has no boundary anymore, then $T_n$ must be disconnected, which is one of the two conclusions we are trying to reach.

We now study the case where it is stopped successfully by $n^{1/4}$ consecutive steps without any closure. We first note that if at time $i$ we choose a new red vertex $v^* \in S_i$, there is a graph geodesic $\gamma$ between $v$ and $v^*$ in $S_i$. All the vertices on $\gamma$ are closer to $v$ than $v^*$, so by the definition of $v^*$ they must be closed vertices. Since at most $3$ closure times have occured up to step $i$ and each has closed at most $2$ vertices, the length of $\gamma$ is at most $6$, so the graph distance between $v$ and the red vertex in $S_i$ is always at most $6$. Therefore, if the exploration is successfully stopped at time $i$, the last red vertex $v'$ is at distance at most $6$ from $v$ in $S_i$, so it is also the case in $T_n$. Moreover, the vertex $v'$ has been red for $n^{1/4}$ steps, so it is incident to at least $n^{1/4}$ corners in $S_i$ and therefore also in $T_n$, so it has degree at least $n^{1/4}$ in $T_n$, so $v'$ satisfies the conclusion of the proposition. Therefore, it is enough to show that the probability for the exploration to fail is $o(n^{-1})$.

But if the exploration fails, then there are at least $3$ closure times during the first $3n^{1/4}$ peeling steps. We recall from \cite[Section 3.1]{BCP19a} that there are two ways in which $i$ may be a closure time:
\begin{itemize}
\item
if the peeled edge at time $i$ is glued to one of its neighbours along the boundary;
\item
if the peeled edge is a loop (i.e. a hole of perimeter $1$) and is glued to another loop.
\end{itemize}
Hence, for every $i$, we have
\[ \PP \left( \mbox{$i$ is a closure time} | \kF_i \right) \leq \frac{2+L_i}{2n-2i-1},\]
where $L_i$ is the number of boundary loops at time $i$. We can now bound $L_i$ in a (much) cruder way than in \cite{BCP19a}. Each peeling step creates at most two loops, so for $i \leq 3n^{1/4}$, we have $L_i \leq 6n^{1/4}$. Therefore, we have
\[ \PP \left( \mbox{$i$ is a closure time} | \kF_i \right) \leq \frac{6n^{1/4}+2}{2n-6n^{1/4}-1} \leq 6n^{-3/4}\]
if $n$ is large enough.
Therefore, for any $0 \leq i<j<k \leq 6n^{1/4}$, the probability that $i,j,k$ are all closure times is at most $\left( 6n^{-3/4} \right)^3=216 n^{-9/4}$. By summing over all triples $(i,j,k)$, we obtain
\[ \PP \left( \mbox{there are $3$ closure times in the first $3n^{1/4}$ steps} \right) \leq \frac{(3 n^{1/4})^3}{6} \times 216 n^{-9/4}=O(n^{-3/2})=o(n^{-1}),\]
which proves that the exploration is successful and concludes the proof.
\end{proof}

\bibliographystyle{alpha}
\bibliography{bib_hyperbolic.bib}

\end{document}